\DeclareMathAlphabet{\mathpzc}{OT1}{pzc}{m}{it} 
\newtheorem{teo}{Theorem}
\newtheorem{lema}{Lemma}[section]
\newtheorem{ThA}{Theorem}
\newcommand{\R}{\mathbb{R}}
\newcommand{\Rn}{{\mathbb{R}^n}}
\DeclareMathAlphabet{\mathpzc}{OT1}{pzc}{m}{it}
\DeclareFontFamily{U}{mathx}{\hyphenchar\font45}
\DeclareFontShape{U}{mathx}{m}{n}{
      <5> <6> <7> <8> <9> <10>
      <10.95> <12> <14.4> <17.28> <20.74> <24.88>
      mathx10
      }{}
\DeclareSymbolFont{mathx}{U}{mathx}{m}{n}
\DeclareMathAccent{\widecheck}{0}{mathx}{"71}
\DeclareMathAccent{\wideparen}{0}{mathx}{"75}
\author[J.J. Betancor]{Jorge J. Betancor}
\author[A.J. Castro]{Alejandro J. Castro}
\author[J.C. Fari\~na]{Juan C. Fari\~na}
\author[L. Rodr\'iguez-Mesa]{L. Rodr\'iguez-Mesa}
\address{\newline
        Jorge J. Betancor, Alejandro J. Castro, Juan C. Fari\~na and Lourdes Rodr\'iguez-Mesa \newline
        Departamento de An\'alisis Matem\'atico,
        Universidad de La Laguna, \newline
        Campus de Anchieta, Avda. Astrof\'{\i}sico Francisco S\'anchez, s/n, \newline
        38271, La Laguna (Sta. Cruz de Tenerife), Spain}
\email{jbetanco@ull.es, ajcastro@ull.es, jcfarina@ull.es, lrguez@ull.es}
\thanks{This paper was partially supported by MTM2010/17974. The second author was also supported by an FPU grant from the Goverment of Spain.}
\date{\today}
\begin{document}

\title[Solutions of Weinstein equations and Bessel Poisson integrals of BMO functions]
{Solutions of Weinstein equations representable by Bessel Poisson integrals of BMO functions}

\subjclass[2010]{42B37, 42B35. 35J15}

\keywords{Weinstein equation, Bessel Poisson integral, BMO, Carleson measure.}

\begin{abstract}
    We consider the Weinstein type equation $\mathcal{L}_\lambda u=0$ on $(0,\infty)\times (0,\infty)$, where $\mathcal{L}_\lambda =\partial ^2_t+\partial^2 _x-\frac{\lambda(\lambda-1)}{x^2}$, with $\lambda >1$. In this paper we characterize the solutions of $\mathcal{L}_\lambda u=0$ on $(0,\infty)\times(0,\infty)$ representable by Bessel-Poisson integrals of $BMO$-functions as those ones satisfying certain Carleson properties.
\end{abstract}

\maketitle

\section{Introduction}\label{sec:intro}

The space $BMO(\Rn)$ of bounded mean oscillation functions in $\Rn$ was introduced by John and Nirenberg (\cite{JN}) in the context of partial differential equations. A function $f\in L^1_{\rm loc}(\Rn)$ is in $BMO(\Rn)$ provided that
$$
\|f\|_{BMO(\Rn)}:=\sup_B\frac{1}{|B|}\int_B |f(x)-f_B|dx < \infty,
$$
where the supremum is taken over all balls $B$ in $\Rn$. Here, $|B|$ denotes the Lebesgue measure of $B$ and $f_B$ represents the average of $f$ on $B$, that is, $f_B=\frac{1}{|B|}\int_Bf(x)dx$. By identifying those functions that differ by a constant, $(BMO(\Rn),\|\cdot\|_{BMO(\Rn)})$ is a Banach space.

A celebrated result of Fefferman and Stein (\cite{FS}) establishes that $BMO(\Rn)$ is the dual space of the Hardy space $H^1(\Rn)$. The spaces $H^1(\Rn)$ and $BMO(\Rn)$ turned out to be the correct substitutes for $L^1(\Rn)$ and $L^\infty(\Rn)$, respectively, as the domain and the target spaces of operators appearing in harmonic analysis.

Since Fefferman and Stein's paper (\cite{FS}) appeared, the space of bounded mean oscillation functions has motivated the investigations of many mathematicians (see, for instance, \cite{ChF}, \cite{CLMS}, \cite{CS}, \cite{DY2}, \cite{DY3}, \cite{FL}, \cite{HM1},  \cite{HW}, \cite{LT}, \cite{MMNO}, \cite{NTV}, \cite{PS}, \cite{RS}, \cite{T} and \cite{Va}).

The space $BMO(\Rn)$ is closely connected to certain positive measures in $\mathbb R^{n+1}_+$ known as Carleson measures. These measures were introduced by Carleson to solve the corona problem (\cite{Ca}). A positive measure $\mu$ on $\mathbb R^{n+1}_+$ is called a Carleson measure when
\begin{equation}\label{1.2}
\|\mu\|_{\mathcal{C}}:=\sup_Q\frac{\mu(Q\times (0,\ell(Q))}{|Q|} < \infty,
\end{equation}
where the supremum is taken over all cubes $Q$ in $\mathbb R^n$. Here $\ell(Q)$ denotes the length of the edge of $Q$. 

If $f$ is a measurable function on $\Rn$ such that $\int_{\Rn}|f(x)|(1+|x|)^{-n-1}dx <\infty$, then, for every $t>0$, the Poisson integral $P_t(f)$ of $f$ is defined by
$$
P_t(f)(x)=\int_{\mathbb R^n} P_t(x-y)f(y)dy, \;\; x \in \Rn\mbox{ and }t>0,
$$
where
$$
P_t(z) = \frac{\Gamma(n+1/2)}{\pi^{n+1/2}}\frac{t}{(|z|^2+t^2)^{(n+1)/2}},\;\;z\in \Rn\;\;\mbox{and}\; t>0.
$$
The characterization of the bounded mean oscillation functions via Carleson measures was given by Fefferman and Stein.
\begin{ThA}\label{th1.1}{\rm (}\cite[p. 145]{FS}{\rm )} Let $f \in L^1_{\rm loc}(\Rn)$. Then, $f \in BMO(\Rn)$ if, and only if, $\int_\Rn|f(x)|(1+|x|)^{-n-1}dx <\infty$ and the measure $t|\nabla P_t(f)(x)|^2dxdt$ is Carleson in $\mathbb R^{n+1}_+$, where $\nabla=\left(\partial _{x_1},\ldots, \partial_{x_n},\partial_{t}\right)$.
\end{ThA}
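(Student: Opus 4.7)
The plan is to prove the two implications separately, combining the $L^2$ spectral theory of the Poisson semigroup with the Fefferman--Stein $H^1$--$BMO$ duality.

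For the necessity direction ($f\in BMO(\Rn)$ implies the Carleson property), I would fix a cube $Q\subset\Rn$ and split
\[ f = f_Q + (f-f_Q)\chi_{2Q} + (f-f_Q)\chi_{\Rn\setminus 2Q} =: f_Q + f_1 + f_2. \]
The constant $f_Q$ is annihilated by $\nabla P_t$. For the local piece $f_1$, the classical $L^2$ square function identity
\[ \int_0^\infty\!\!\int_{\Rn} t\,|\nabla P_t(g)(x)|^2\,dx\,dt = c\,\|g\|_{L^2(\Rn)}^2 \]
reduces matters to $\|f_1\|_{L^2(\Rn)}^2\lesssim |2Q|\,\|f\|_{BMO(\Rn)}^2$, which is immediate from John--Nirenberg. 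For the global piece $f_2$, by differentiating the Poisson kernel and decomposing $\Rn\setminus 2Q$ into dyadic annuli $2^{k+1}Q\setminus 2^{k}Q$, I would derive the pointwise bound
\[ |\nabla P_t(f_2)(x)| \lesssim \frac{\|f\|_{BMO(\Rn)}}{\ell(Q)},\qquad (x,t)\in Q\times(0,\ell(Q)), \]
and integrating $t\,|\nabla P_t(f_2)(x)|^2$ over the Carleson box yields a contribution of $O(|Q|\,\|f\|_{BMO(\Rn)}^2)$.

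For the sufficiency direction I would argue via duality with $H^1(\Rn)$. Starting from the polarization of the square function identity
\[ \int_{\Rn} f(x)\overline{g(x)}\,dx = c\int_0^\infty\!\!\int_{\Rn} t\,\nabla P_t(f)(x)\cdot\overline{\nabla P_t(g)(x)}\,dx\,dt, \]
valid for $g$ a finite linear combination of $H^1$-atoms (so that both sides make sense once $f$ satisfies the growth condition), I would apply the tent space duality of Coifman--Meyer--Stein,
\[ \left|\int_0^\infty\!\!\int_{\Rn} F(x,t)\overline{G(x,t)}\,\frac{dx\,dt}{t}\right| \lesssim \|F\|_{T^\infty_2}\,\|G\|_{T^1_2}, \]
where the $T^\infty_2$-norm of $F$ coincides with $\||F|^2 t^{-1}dx\,dt\|_{\mathcal{C}}^{1/2}$ and the $T^1_2$-norm of $G$ is the $L^1(\Rn)$-norm of its Lusin area function. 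Taking $F(x,t)=\sqrt{t}\,\nabla P_t(f)(x)$ and $G(x,t)=\sqrt{t}\,\nabla P_t(g)(x)$ and invoking $\|S(P_t g)\|_{L^1(\Rn)}\sim \|g\|_{H^1(\Rn)}$ gives $\bigl|\int_{\Rn} fg\,dx\bigr|\lesssim \|t|\nabla P_t(f)|^2\,dx\,dt\|_{\mathcal{C}}^{1/2}\,\|g\|_{H^1(\Rn)}$, and Fefferman--Stein duality $(H^1(\Rn))^*=BMO(\Rn)$ then produces the desired $BMO$ bound.

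The main obstacle is the tent space Cauchy--Schwarz estimate: a rigorous proof requires either a stopping-time/good-$\lambda$ argument or the systematic Coifman--Meyer--Stein theory of tent spaces. One must also justify the polarization identity for $f$ with only the growth condition $\int_{\Rn}|f(x)|(1+|x|)^{-n-1}dx<\infty$ rather than $L^2$ integrability, typically by truncating $f$ and passing to the limit. The growth condition itself follows for $f\in BMO(\Rn)$ from the John--Nirenberg inequality, so that half of the statement requires no extra work.
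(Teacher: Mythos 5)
The statement you are proving is quoted in the paper as a known classical result of Fefferman and Stein (cited to \cite[p. 145]{FS}); the paper gives no proof of it, so there is no internal argument to compare yours against line by line. That said, your sketch is the standard and correct route, and it is worth noting that the necessity half of your argument is precisely the strategy the authors deploy in Section \ref{sec: (i)(ii)} for the Bessel analogue: the same three-term splitting $f=f_Q+(f-f_Q)\chi_{2Q}+(f-f_Q)\chi_{\Rn\setminus 2Q}$, an $L^2$ square-function identity proved by Plancherel (they use the Hankel transform where you use the Fourier transform) for the local piece, and a pointwise kernel-decay estimate over dyadic annuli giving $|\nabla P_t(f_2)(x)|\lesssim \|f\|_{BMO(\Rn)}/\ell(Q)$ on the Carleson box for the far piece. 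The only structural difference is that in the Bessel setting the constant term $f_{2I}$ is \emph{not} annihilated, since $P_t^\lambda$ is not Markovian, which forces their Lemma \ref{Lem2.3}; in your Euclidean setting $\nabla P_t(f_Q)=0$ and that piece is trivial, as you say. Your sufficiency argument via tent-space duality and $(H^1(\Rn))^*=BMO(\Rn)$ is likewise the accepted proof; the two technical points you flag --- justifying the polarization/reproducing identity under only the growth condition $\int_{\Rn}|f(x)|(1+|x|)^{-n-1}dx<\infty$ by truncation, and the Carleson embedding (or $T^\infty_2\times T^1_2$ pairing) --- are exactly the places where the real work lies, and you would also want to record the final step that $\int fg=\int bg$ for all finite atom combinations $g$ forces $f-b$ to be constant a.e. With those details supplied, the proposal is a complete and correct proof of Theorem \ref{th1.1}.
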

Some versions of this result for $BMO$-type spaces associated with operators have been established in the last decade (see \cite{BCFR1}, \cite{DYZ}, \cite{DGMTZ}, \cite{HM1} and  \cite{LTZ}, amongst others).

Theorem \ref{th1.1} was completed by Fabes, Johnson and Neri (\cite{FJN2} and \cite{FJN1}). An harmonic function $u$ defined on $\mathbb R^{n+1}_+$ is said to be in $HMO(\mathbb R^{n+1}_+)$ provided that
$$
\sup_Q\frac{1}{|Q|}\int_0^{\ell(Q)}\int_Q|t\nabla u(x,t)|^2\frac{dxdt}{t}<\infty,
$$
where the supremun is taken over all cubes in $\Rn$. Theorem \ref{th1.1} implies that $P_t(BMO(\Rn))\subseteq HMO(\mathbb R^{n+1}_+)$ suitably understood. The equality is established in the following.
\begin{ThA}\label{th1.2}{\rm (}\cite[Theorem 1.0]{FJN1}{\rm )}.
A function $u \in HMO(\mathbb R^{n+1}_+)$ if, and only if, there exits $f \in BMO(\Rn)$ such that $u(x,t)=P_t(f)(x)$, $(x,t)\in \mathbb R^{n+1}_+$.
\end{ThA}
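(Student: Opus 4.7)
The easy direction is immediate from Theorem~\ref{th1.1}: if $u(x,t)=P_t(f)(x)$ with $f\in BMO(\Rn)$, then $t|\nabla u(x,t)|^2\,dx\,dt$ is a Carleson measure on $\R^{n+1}_+$, and this is exactly the $HMO$ condition once the weight $1/t$ is absorbed into $|t\nabla u|^2$. For the converse, given $u\in HMO(\R^{n+1}_+)$, the strategy is to recover $f$ as a weak-$*$ limit of the horizontal slices $u(\cdot,\varepsilon)$ as $\varepsilon\to 0^+$.

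The first step is a pointwise gradient estimate. Since $u$ is harmonic, $|\nabla u|^2$ is subharmonic, so applying the sub-mean-value inequality on $B((x,t),t/2)\subset\R^{n+1}_+$, and combining the fact that $s\asymp t$ on this ball with the $HMO$ bound on a cube of side $\asymp t$ lying below $(x,t)$, yields $|\nabla u(x,t)|\le C\,\|u\|_{HMO}/t$. In particular, $u$ grows at most logarithmically in $t$.

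The second step is to show that the slices $f_\varepsilon(x):=u(x,\varepsilon)-u(0,1)$ lie in $BMO(\Rn)$ uniformly in $\varepsilon\in(0,1)$. Given a cube $Q$ of side $r$, the analysis splits at the threshold $\varepsilon=r$. If $\varepsilon\ge r$, the gradient bound directly controls the oscillation by $Cr/\varepsilon\le C$. If $\varepsilon<r$, one decomposes
\[
f_\varepsilon(x)-f_\varepsilon(y)=\bigl[u(x,\varepsilon)-u(x,r)\bigr]+\bigl[u(x,r)-u(y,r)\bigr]+\bigl[u(y,r)-u(y,\varepsilon)\bigr];
\]
the middle piece is handled by Step~1 at height $r$, and for the outer pieces one writes $u(x,\varepsilon)-u(x,r)=-\int_\varepsilon^r\partial_tu(x,t)\,dt$, squares, and applies Cauchy--Schwarz so that the resulting $L^2$-average over $Q$ is absorbed into the $HMO$ integral on $Q\times(0,r)$.

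Finally, the weak-$*$ compactness of $BMO(\Rn)=(H^1(\Rn))^*$ yields a sequence $\varepsilon_k\downarrow 0$ along which $f_{\varepsilon_k}\to f$ in the weak-$*$ topology, for some $f\in BMO(\Rn)$. Since $P_s(x-\cdot)\in H^1(\Rn)$ for each fixed $(x,s)\in\R^{n+1}_+$, one has $P_s(f_{\varepsilon_k})(x)\to P_s(f)(x)$. Invoking a uniqueness theorem for harmonic extensions with $BMO$-compatible growth, applied to the shifted function $(x,t)\mapsto u(x,t+\varepsilon_k)-u(0,1)$, identifies $P_s(f_{\varepsilon_k})(x)=u(x,s+\varepsilon_k)-u(0,1)$. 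Passing to the limit along $\varepsilon_k$ gives $u(x,s)=P_s(f)(x)+c$, and the constant $c$ is absorbed into $f$. The main obstacle is this last identification: one must ensure both that the weak-$*$ limit commutes with Poisson extension (which requires $P_s(x-\cdot)\in H^1(\Rn)$, a classical but non-trivial fact) and that a harmonic function with $BMO$-type growth is determined by its boundary trace, a statement whose proof rests on the logarithmic growth supplied by Step~1.
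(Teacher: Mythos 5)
Your easy direction and your Step 1 are fine, and your overall scheme (gradient bound from subharmonicity plus the Carleson condition, uniformly $BMO$ slices, weak-$*$ compactness in $BMO(\Rn)=(H^1(\Rn))^*$, Poisson kernel in $H^1$, and a uniqueness theorem to identify the limit) is exactly the scheme this paper uses — not for Theorem \ref{th1.2} itself, which is only quoted from \cite{FJN1}, but for its Bessel analogue Theorem \ref{th1.4}: compare (\ref{3.2}), (\ref{3.8}), (\ref{3.13}), (\ref{3.16}), Lemmas \ref{Lem3.1}, \ref{Lem3.4}, \ref{Lem3.5} and \ref{Lem3.6}.

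The genuine gap is in your Step 2. For $\varepsilon<r$ you claim the vertical pieces are controlled by squaring and ``absorbing'' into the $HMO$ integral over $Q\times(0,r)$ via Cauchy--Schwarz, but that estimate reads
\[
|u(x,\varepsilon)-u(x,r)|\le\Bigl(\int_\varepsilon^r t|\partial_tu(x,t)|^2\,dt\Bigr)^{1/2}\Bigl(\log\frac{r}{\varepsilon}\Bigr)^{1/2},
\]
so after averaging over $Q$ you only get $C\bigl(\log(r/\varepsilon)\bigr)^{1/2}\|u\|_{HMO}^{1/2}$, which is not uniform in $\varepsilon$ for a fixed cube. The uniform $BMO$ bound for the slices is true, but it is not obtained this way: in \cite{FJN2} it comes from a Green's formula identity, and in this paper's Bessel analogue it comes from first proving the semigroup representation $u(x,t+\varepsilon)=P_t^\lambda(u(\cdot,\varepsilon))(x)$ (the analogue of (\ref{3.8}), whose proof needs only the integrability of the slice, which your Step 1 gradient bound already supplies, together with the uniqueness Lemma \ref{Lem3.4}), then the uniform Carleson property of $t|\nabla u(x,t+\varepsilon)|^2dxdt$ (the analogue of (\ref{3.13})), and finally the \emph{converse} implication of the Fefferman--Stein characterization (here Theorem \ref{th1.3}/Lemma \ref{equiv}, in your setting Theorem \ref{th1.1}) to conclude $u(\cdot,\varepsilon)\in BMO$ uniformly. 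So you should reverse the order of your steps: prove the representation for the shifted function first, and deduce the uniform $BMO$ bound from the Carleson$\Rightarrow BMO$ direction rather than by a direct oscillation estimate; with that correction your weak-$*$ limit argument and the final identification through the $H^1$ pairing proceed as in the paper.
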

Other proof of Theorem \ref{th1.2} can be encountered in \cite[Theorem 1.0]{FJN2}. The result in Theorem \ref{th1.2} was also proved by Fabes and Neri \cite{FN} when $\mathbb R^{n+1}_+$ is replaced by a bounded starlike Lipschitz domain (\cite[Theorem p. 35]{FN}). More recently, Chen  \cite[Theorems A and 18]{Jie} and Chen and Luo \cite{CL} characterized the harmonic functions in $M\times (0,\infty)$, where $M$ is a complete connected Riemannian manifold with positive Ricci curvature, having $BMO(M)$ traces. In \cite[Theorem 1.1]{DYZ} Duong, Yang, and Zhang have extended Theorem \ref{th1.2} to Poisson integrals and $BMO$ functions in the Schr\"odinger operator setting.

Our objective in this paper is to establish a version of Theorem \ref{th1.2} in the Bessel operator context.

The study of harmonic analysis associated with Bessel operators was began in a systematic way by Muckenhoupt and Stein (\cite{MS}). In the last decade Bessel harmonic analysis has been developed (see, for instance, \cite{BBFMT},  \cite{BCC2}, \cite{BFMR}, \cite{BFS}, \cite{BMR}, \cite{Dz}, \cite{NoSt} and \cite{Vi}).

For $\lambda >0$ we consider the Bessel operator on $(0,\infty)$
$$
B_\lambda=\frac{d^2}{dx^2}-\frac{\lambda(\lambda -1)}{x^2}= x^{-\lambda }\frac{d}{dx}x^{2\lambda }\frac{d}{dx}x^{-\lambda}.
$$
According to \cite[\S 16]{MS} the Poisson semigroup $\{P_t^\lambda\}_{t>0}$ associated with the Bessel operator  $B_\lambda$ is defined as follows. If $f\in L^p(0,\infty)$, $1\leq p \leq \infty$,
$$
P_t^\lambda(f)(x)=\int_0^\infty P_t^\lambda(x,y) f(y) dy,\;\; x,t\in (0,\infty),
$$
where
$$
P_t^\lambda(x,y)=\frac{2\lambda(xy)^\lambda t}{\pi}\int_0^\pi \frac{(\sin\theta)^{2\lambda-1}}{((x-y)^2+t^2+2xy(1-\cos\theta))^{\lambda +1}}d\theta,\;\;x,y,t\in (0,\infty).
$$
$L^p$-boundedness properties of $\{P_t^\lambda\}_{t>0}$ and the associated maximal operators were studied in \cite{BS2} and \cite{NoSt3}. The semigroup $\{P_t^\lambda\}_{t>0}$ has not the Markovian property, that is, $P_t^\lambda$ does not map constants into constants. This fact produces technical difficulties when studying Bessel Poisson semigroups on functions of bounded mean oscillation (see \cite{BCFR1}).

We denote  by $BMO_{\rm o}(\R)$ the space of all those odd functions $f \in BMO(\R)$. Let $1<p<\infty$.  For every $f \in BMO_{\rm o}(\R)$ there exists $C>0$ such that,
\begin{equation}\label{1.3}
\left\{\frac{1}{|I|}\int_I|f(x)-f_I|^p dx\right\}^{1/p} \leq C,
\end{equation}
for every interval $I=(a,b)$, $0<a<b<\infty$ and
\begin{equation}\label{1.4}
\left\{\frac{1}{|I|}\int_I|f(x)|^p dx\right\}^{1/p} \leq C,
\end{equation}
for each interval $I=(0,b)$, $0<b<\infty$. Moreover, the quantity $\inf\{C>0: (\ref{1.3})\;\mbox{and}\;(\ref{1.4})\;\mbox{hold}\}$ is equivalent to $\|f\|_{BMO(\R)}$.

Conversely, if $f\in L^1_{\rm loc}[0,\infty)$ satisfies (\ref{1.3}) and (\ref{1.4}) for all admisible intervals, then the odd extension $f_{\rm o}$ of $f$ to $\R$ belongs to $BMO_{\rm o}(\R)$ and $\|f_{\rm o}\|_{BMO(\R)}$ is equivalent to the quantity $\|f\|_{BMO_{\rm o}(\mathbb{R})}:=\inf\{C>0: (\ref{1.3})\;\mbox{and}\;(\ref{1.4})\;\mbox{hold}\}$ (see \cite[Proposition 12]{Ger} and \cite[Corollary p. 144]{Ste2}). In this case we also say that $f\in BMO_{\rm o}(\mathbb{R})$.

As in (\ref{1.2}) we say that a positive measure $\mu$ on $(0,\infty)\times (0,\infty)$ is Carleson when
$$
\|\mu\|_{\mathcal{C}}:=\sup_I\frac{\mu(I\times(0,|I|))}{|I|}<\infty,
$$
where the supremun is taken over all bounded intervals $I \subset (0,\infty)$.

In \cite{BCFR1} it was proved a Bessel version of Theorem \ref{th1.1}.
\begin{ThA}\label{th1.3} {\rm (}\cite[Theorem 1.1]{BCFR1}{\rm )}. Let $\lambda >0$. Assume that $f \in L^1_{\rm loc}[0,\infty)$. Then, the following assertions are equivalent.
\begin{enumerate}
\item[$(i)$] $f \in BMO_{\rm o}(\R)$.
\item[$(ii)$] $(1+x^2)^{-1}f \in L^1(0,\infty)$ and
$$
d\gamma_f(x,t)=\left|t \partial _tP_t^\lambda(f)(x)\right|^2 \frac{dxdt}{t}
$$
is a Carleson measure on $(0,\infty)\times(0,\infty)$.
\end{enumerate}
Moreover, the quantities $\|f\|^2_{BMO_{\rm o}(\R)}$ and $\|\gamma_f\|_{\mathcal{C}}$ are equivalent.
\end{ThA}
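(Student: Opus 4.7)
The plan is to prove the two implications separately, following the Fefferman--Stein strategy behind Theorem~\ref{th1.1}, while accommodating two Bessel-specific features: the semigroup $\{P_t^\lambda\}_{t>0}$ is not Markovian (so the classical identity $P_t^\lambda(c)=c$ fails), and the endpoint $x=0$ is singular, which is exactly why the description of $BMO_{\rm o}(\R)$ in (\ref{1.3})--(\ref{1.4}) distinguishes intervals $(a,b)$ with $a>0$ from intervals $(0,b)$.

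For $(i)\Rightarrow(ii)$: The bound $(1+x^2)^{-1}f\in L^1(0,\infty)$ is a standard consequence of the logarithmic growth of $BMO$-averages applied, via the odd extension, to (\ref{1.3})--(\ref{1.4}). For the Carleson estimate, fix a bounded interval $I\subset(0,\infty)$ and split according to whether $I$ is separated from, or touches, the origin. In each case write $f=f_1+f_2$, where $f_1=(f-c_I)\chi_{I^*}$ for a Bessel-adapted dilate $I^*$ of $I$, with $c_I=f_I$ when $I$ is away from $0$ and $c_I=0$ when $I=(0,b)$ (so that (\ref{1.4}) applies). For $f_1$ I would invoke the $L^2(0,\infty)$-boundedness of the Littlewood--Paley $g$-function associated with $\{P_t^\lambda\}_{t>0}$, which gives
$$
\iint_{I\times(0,|I|)}\bigl|t\partial_t P_t^\lambda(f_1)(x)\bigr|^2\,\frac{dx\,dt}{t}\;\lesssim\;\|f_1\|_{L^2}^2\;\lesssim\;|I|\,\|f\|_{BMO_{\rm o}(\R)}^2.
$$
For $f_2$ I would plug in pointwise size estimates on $\partial_t P_t^\lambda(x,y)$ extracted from the explicit integral for $P_t^\lambda(x,y)$, dyadically decompose the complement of $I^*$, and sum the annular bound $\int_{2^k|I|<|y-x_I|<2^{k+1}|I|}|f(y)-c_I|\,dy\lesssim (k+1)2^k|I|\,\|f\|_{BMO_{\rm o}(\R)}$ against the decay in $k$.

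For $(ii)\Rightarrow(i)$: I would set up a Calder\'on-type reproducing formula for $\{P_t^\lambda\}_{t>0}$ and dualize against a Bessel Hardy-type space $H^1_\lambda$ whose atoms are functions $a$ on $(0,\infty)$ supported on an interval $I_a$, with cancellation and $\|a\|_{L^2}\lesssim|I_a|^{-1/2}$. Concretely, one writes
$$
\int_0^\infty f(x)\,a(x)\,dx\;=\;c\iint_{(0,\infty)\times(0,\infty)}\bigl(t\partial_t P_t^\lambda(f)(x)\bigr)\,\overline{\bigl(t\partial_t P_t^\lambda(a)(x)\bigr)}\,\frac{dx\,dt}{t},
$$
splits the $(x,t)$-integral into the Carleson box $I_a\times(0,|I_a|)$ and its complement, applies Cauchy--Schwarz in the first piece (combining the hypothesis that $d\gamma_f$ is Carleson with the tent-space bound $\iint|t\partial_t P_t^\lambda(a)|^2\,dx\,dt/t\lesssim 1$), and exploits kernel decay in the second. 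Summing over atoms delivers the oscillation estimates (\ref{1.3}) and the boundary estimate (\ref{1.4}), placing $f$ in $BMO_{\rm o}(\R)$.

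The main obstacle I foresee is producing (\ref{1.4}) for intervals $I=(0,b)$ in the direction $(ii)\Rightarrow(i)$: here no average can be subtracted from $f$, so any constant drift introduced by the reproducing formula near $x=0$ must be shown to vanish. This is exactly where the failure of the Markov property of $\{P_t^\lambda\}_{t>0}$ combined with the Bessel singularity at $0$ becomes visible; the resolution requires handling the odd-extension framework carefully so that the boundary contribution to $\partial_t P_t^\lambda(x,y)$ as $x\to 0^+$ is either absorbed into the tent-space estimate or cancels via the explicit form of $P_t^\lambda(x,y)$.
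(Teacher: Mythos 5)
Your overall strategy --- the Fefferman--Stein square-function argument for $(i)\Rightarrow(ii)$ and duality with a Bessel Hardy space via a Calder\'on-type reproducing formula for $(ii)\Rightarrow(i)$ --- is the same one used in \cite{BCFR1}, which this paper only cites; the closest the paper comes to reproducing that proof is the $D_{\lambda,x}$-analogue in Section \ref{sec: (i)(ii)} and the sketch of Lemma \ref{equiv}. Measured against that argument, your proposal has a concrete gap in each direction. In $(i)\Rightarrow(ii)$, the two-piece decomposition $f=f_1+f_2$ with $f_1=(f-c_I)\chi_{I^*}$ loses the constant: $f_2=(f-c_I)\chi_{(I^*)^{\rm c}}+c_I$, and your annular estimate only controls the first summand. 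Because $\{P_t^\lambda\}_{t>0}$ is not Markovian, $t\partial_tP_t^\lambda(c_I)=c_I\,t\partial_tP_t^\lambda(1)$ does not vanish, and its contribution to the Carleson box is not negligible. This is exactly why the paper uses a three-way split $f_1+f_2+f_3$ with $f_3=f_{2I}$ and devotes a separate lemma (the $\partial_t$-analogue of Lemma \ref{Lem2.3}) to proving
$$
\frac{(x_I+|I|)^2}{|I|^3}\int_0^{|I|}\int_I|t\partial_tP_t^\lambda(1)(x)|^2\frac{dxdt}{t}\leq C,
$$
combined with $|f_{2I}|\leq\frac{x_I+|I|}{|I|}\|f\|_{BMO_{\rm o}(\R)}$, a bound that itself uses both \eqref{1.3} and \eqref{1.4}. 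Without an estimate of this kind the forward implication does not close.

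In $(ii)\Rightarrow(i)$ you describe the Hardy-space atoms as all having cancellation, but the atomic decomposition of $H_{\rm o}^1(\mathbb{R})$ also contains the non-cancellative atoms $a=\delta^{-1}\chi_{(0,\delta)}$, and testing against precisely these atoms is what yields \eqref{1.4} for intervals $(0,b)$. Relatedly, since $P_t^\lambda 1\neq 1$, the reproducing formula does not reduce cleanly to the pairing of square functions: it carries a genuine remainder term (in the paper's sketch of Lemma \ref{equiv}, the operator $M_t^\lambda(a)=\frac14\bigl[t\partial_vP_{2v}^\lambda(a)_{|v=t}-P_{2t}^\lambda(a)\bigr]$), whose contribution is controlled by estimates of the type \eqref{x1}--\eqref{x2}. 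You correctly identify this boundary/drift issue as the main obstacle, but you leave it unresolved, and it is exactly the step where the proof of \eqref{1.4} lives; as written, that part of the argument is missing its mechanism rather than merely its details.
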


\noindent{\sc Remark}. Another characterization of $BMO_{\rm o}(\R)$, slightly different to $(ii)$ in Theorem \ref{th1.3} and that will be used in Section \ref{sec: (ii)(i)}, is given in Lemma \ref{equiv}.
\vspace{0.2cm}

If $\Omega \subseteq (0,\infty)\times \R$ we say that a function $u\in C^2(\Omega)$ is $\lambda$-harmonic provided that 
$$
\partial_t^2u(x,t)+B_{\lambda,x}u(x,t)=0, \quad (x,t) \in \Omega .
$$
 The operator $\partial_t^2+ B_{\lambda,x}$ is related to the Weinstein operator associated with the generalized  axially symmetric potential theory (see \cite{CR} and the references there). We can write $B_\lambda=-D_\lambda^*D_\lambda$, where $D_\lambda=x^\lambda\frac{d}{dx}x^{-\lambda}$ and $D^*_\lambda$ is the formal adjoint operator of $D_\lambda$ in $L^2(0,\infty)$. We define the $\lambda$-gradient $\nabla_\lambda$ by
$$
\nabla_\lambda=\left(D_{\lambda,x}, \partial _t\right).
$$
The main result of this paper is the following.
\begin{teo}\label{th1.4}
Let $\lambda >1$. Assume that $u$ is a $\lambda$-harmonic function on $(0,\infty)\times(0,\infty)$ such that $x^{-\lambda}u(x,t) \in C^{\infty}(\R \times (0,\infty))$ and is even in the $x$-variable. Then, the following assertions are equivalent.
\begin{enumerate}
\item[$(i)$] There exists $f \in BMO_{\rm o}(\R)$ such that $u(x,t)= P_t^\lambda(f)(x)$, $(x,t)\in (0,\infty)\times(0,\infty)$.
\item[$(ii)$] The measure
$$
d\mu_\lambda(x,t)=|t\nabla_\lambda u(x,t)|^2\frac{dxdt}{t}
$$
is Carleson on $(0,\infty)\times(0,\infty)$.
\end{enumerate}
Moreover, the quantities $\|f\|_{BMO_{\rm o}(\R)}^2$ and $\|\mu_\lambda \|_{\mathcal{C}}$ are equivalent.
\end{teo}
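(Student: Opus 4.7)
Decompose $|t\nabla_\lambda u|^2=|t\partial_t u|^2+|tD_{\lambda,x} u|^2$. With $u=P_t^\lambda(f)$ and $f\in BMO_{\rm o}(\R)$, the Carleson character of the first term is exactly Theorem \ref{th1.3}. For the second term my plan is to adapt the proof of Theorem \ref{th1.3} in \cite{BCFR1}, replacing the kernel $\partial_t P_t^\lambda(x,y)$ by $D_{\lambda,x}P_t^\lambda(x,y)$ and using analogous size/smoothness estimates, which can be obtained from the explicit integral formula for $P_t^\lambda(x,y)$. Concretely, for a fixed bounded interval $I\subset(0,\infty)$ I would split
$$
f=(f-f_{2I})\chi_{2I}+(f-f_{2I})\chi_{(0,\infty)\setminus 2I}+f_{2I}\chi_{(0,\infty)}
$$
and estimate each piece on $I\times(0,|I|)$ by combining $L^2$-boundedness of the operator, pointwise kernel bounds, and the quasi-equivalent forms \eqref{1.3}--\eqref{1.4} of the $BMO_{\rm o}$ norm. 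The only genuinely new ingredient with respect to the $\partial_t$-version is the constant piece $f_{2I}\chi_{(0,\infty)}$, since $D_{\lambda,x}P_t^\lambda(\mathbf{1})\neq 0$; this is handled using the explicit form of $P_t^\lambda(x,y)$, the assumption $\lambda>1$, and \eqref{1.4} applied to intervals anchored at the origin to estimate $|f_{2I}|$.

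\textbf{Direction $(ii)\Rightarrow(i)$.} For each $\eps>0$ put $f_\eps(x):=u(x,\eps)$, $x\in(0,\infty)$. The core step is the semigroup identity
$$
u(x,t+\eps)=P_t^\lambda(f_\eps)(x),\quad(x,t)\in(0,\infty)\times(0,\infty),
$$
which I would prove by a uniqueness argument for the Weinstein equation, exploiting the regularity hypothesis that $x^{-\lambda}u$ is smooth across $x=0$ and even (so that $f_\eps$ belongs to a class on which $P_t^\lambda$ acts). Granted this, a change of variable $s=t+\eps$ together with the Carleson hypothesis on $u$ yields the uniform-in-$\eps$ bound
$$
\sup_{I}\frac{1}{|I|}\int_0^{|I|}\!\!\int_I|t\partial_t P_t^\lambda(f_\eps)(x)|^2\frac{dxdt}{t}\leq C\|\mu_\lambda\|_{\mathcal C},
$$
the supremum being taken over bounded intervals $I\subset(0,\infty)$. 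By Theorem \ref{th1.3} this gives $\|(f_\eps)_{\rm o}\|_{BMO(\R)}\leq C\|\mu_\lambda\|_{\mathcal C}^{1/2}$ uniformly in $\eps$, so the family $\{f_\eps\}$ is bounded in $BMO_{\rm o}(\R)$. Since this space is a dual (of a Hardy-type space adapted to the odd setting), I extract a weak-$*$ accumulation point $f$ along some sequence $\eps_n\downarrow 0$. Testing against the Bessel--Poisson kernel $P_t^\lambda(x,\cdot)$, which after odd extension lies in the predual, and invoking the continuity of $u$, one passes to the limit in $P_t^\lambda(f_{\eps_n})(x)=u(x,t+\eps_n)$ and concludes $u(x,t)=P_t^\lambda(f)(x)$, together with the norm comparison $\|f\|_{BMO_{\rm o}(\R)}^2\leq C\|\mu_\lambda\|_{\mathcal C}$.

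\textbf{Expected main obstacle.} The delicate points are (a) justifying the semigroup identity $u(\cdot,t+\eps)=P_t^\lambda(f_\eps)$, since $\{P_t^\lambda\}$ is not Markovian and the traces $f_\eps$ are only known a priori to be smooth on $(0,\infty)$ with the growth inherited from the $BMO$ estimate, and (b) identifying a convenient predual to $BMO_{\rm o}(\R)$ that allows both weak-$*$ compactness and the limit exchange with the kernel $P_t^\lambda(x,\cdot)$. I expect the alternative characterization of $BMO_{\rm o}(\R)$ mentioned in the Remark and stated in Lemma \ref{equiv} to be the crucial technical tool both for step (b) and for the quantitative comparison between $\|f\|_{BMO_{\rm o}(\R)}$ and $\|\mu_\lambda\|_{\mathcal{C}}^{1/2}$.
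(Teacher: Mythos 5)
Your direction $(i)\Rightarrow(ii)$ coincides with the paper's argument: the same three-term decomposition of $f$, $L^2$-boundedness of the Littlewood--Paley function built from $tD_{\lambda,x}P_t^\lambda$ (proved via the Hankel transform) for the local piece, the kernel bound $|D_{\lambda,x}P_t^\lambda(x,y)|\leq C((x-y)^2+t^2)^{-1}$ for the far piece, and a direct Carleson-type estimate for $tD_{\lambda,x}P_t^\lambda(1)$ combined with \eqref{1.4} to control $|f_{2I}|$ for the constant piece (the paper in fact gets this whole implication for every $\lambda>0$). The serious problem is in $(ii)\Rightarrow(i)$: you propose to obtain $u(x,t+\eps)=P_t^\lambda(f_\eps)(x)$ ``by a uniqueness argument'' applied directly to $u$. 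The only uniqueness statement available in this setting (Lemma \ref{Lem3.4}, which rests on Leutwiler's representation of nonnegative $\lambda$-harmonic functions on the half-plane) requires the solution to be \emph{bounded} up to the boundary, and neither $u(\cdot,\cdot+\eps)$ nor its trace $f_\eps$ is bounded: a BMO-type trace may grow logarithmically, and smoothness and evenness of $x^{-\lambda}u$ across $x=0$ give no control at infinity. What the Carleson hypothesis does make bounded is $\partial_t u$ on each half-plane $t\geq t_0$, via interior estimates coming from subharmonicity of $u^2$ (see \eqref{3.2} and \eqref{9.1}). That is why the paper proves the semigroup identity first for $\partial_t u$ (formula \eqref{3.3}), and only then recovers \eqref{3.8} for $u$ itself by showing that $\partial_r[u(x,t+r)-P_t^\lambda(u(\cdot,r))(x)]=0$, that the $t$-derivative of this difference tends to $0$ as $r\to\infty$ (estimate \eqref{3.9}, a nontrivial computation which also yields the integrability \eqref{M1} needed even to define $P_t^\lambda(u(\cdot,r))$), and that the difference vanishes as $t\to0^+$ (Lemma \ref{Lem3.1}). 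None of this appears in your sketch, and without it the central identity is unproved; this is the main gap.

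Two secondary points. First, once the identity is in hand you invoke Theorem \ref{th1.3} to get the uniform BMO bound for $f_\eps$, but its hypothesis $(1+x^2)^{-1}f_\eps\in L^1(0,\infty)$ is not available; the estimates only give the weaker condition $x^\lambda(1+x^2)^{-\lambda-1}f_\eps\in L^1(0,\infty)$, which is precisely why the paper proves Lemma \ref{equiv} --- you do flag Lemma \ref{equiv} as an expected tool, so this is a matter of making it the actual ingredient rather than Theorem \ref{th1.3}. Second, the uniform Carleson bound for the traces is not just ``a change of variable'': for intervals with $|I|<\eps$ the translated box has height $|I|+\eps\gg|I|$ and the naive substitution destroys the Carleson normalization; the paper's proof of \eqref{3.13} treats this regime separately using the pointwise bound $|\partial_tu(x,t+\eps)|\leq C(t+\eps)^{-1}\|\mu_\lambda\|_{\mathcal{C}}^{1/2}$, again a consequence of the subharmonicity/interior estimates that your outline omits. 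The final weak-$*$ compactness step (duality of $BMO_{\rm o}(\R)$ with the Bessel Hardy space, $P_t^\lambda(x,\cdot)\in H_{\rm o}^1(\R)$, and identification of the duality pairing with the integral) does match the paper's Lemmas \ref{Lem3.5} and \ref{Lem3.6}.
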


Note that the  property $(ii)$ in Theorem \ref{th1.4} is stronger than the condition $(ii)$ in Theorem \ref{th1.3}.

In the next sections we prove Theorem \ref{th1.4}. In the sequel by $C$ we always denote a positive constant not necessarily the same in each occurrence.

\section{Proof of $(i)\Longrightarrow (ii)$ in Theorem \ref{th1.4}}\label{sec: (i)(ii)}
As it can be observed along the proof, this part of Theorem \ref{th1.4} is valid for $\lambda >0$.

Assume that $u(x,t)=P_t^ \lambda (f)(x)$, $x,t\in (0,\infty )$, for a certain $f\in BMO_{\rm o}(\mathbb{R})$. According to Theorem \ref{th1.3} the measure
$$
d\gamma _f(x,t)=\left|t\partial_tP_t^\lambda (f)(x)\right|^2\frac{dxdt}{t}
$$
is Carleson on $(0,\infty)\times (0,\infty )$. Moreover, we have that
$$
\|\gamma _f\|_\mathcal{C}\leq C\|f\|^2_{BMO_{\rm o}(\mathbb{R})},
$$
where $C>0$ does not depend on $f$.

We are going to see that the measure
$$
d\rho _f(x,t)=|tD_{\lambda ,x}P_t^\lambda (f)(x)|^2\frac{dxdt}{t}
$$
is Carleson on $(0,\infty)\times (0,\infty )$ and that
$$
\|\rho _f\|_\mathcal{C}\leq C\|f\|^2_{BMO_{\rm o}(\mathbb{R})},
$$
for certain $C>0$ which does not depend on $f$.

Let $I=(a,b)$ where $0\leq a<b<\infty$. We decompose $f$ as follows
$$
f=(f-f_{2I})\chi _{2I}+(f-f_{2I})\chi _{(0,\infty )\setminus 2I}+f_{2I}=:f_1+f_2+f_3.
$$
Here $2I=(x_I-|I|, x_I+|I|)\cap (0,\infty )$ and $x_I=(a+b)/2$.

We will prove that
\begin{equation}\label{2.1}
\frac{1}{|I|}\int_0^{|I|}\int_I|tD_{\lambda ,x}P_t^\lambda (f_j)(x)|^2\frac{dxdt}{t}\leq C\|f\|_{BMO_{\rm o}(\mathbb{R})}^2,\quad j=1,2,3,
\end{equation}
for certain $C>0$ independent of $I$ and $f$.

\subsection{Proof of (\ref{2.1}) for $j=1$}\label{subs2.1} We introduce the Littlewood-Paley function $g_\lambda $ defined by
$$
g_\lambda (F)(x)=\left(\int_0^\infty |tD_{\lambda ,x}P_t^\lambda (F)(x)|^2\frac{dt}{t}\right)^{1/2},\quad x\in (0,\infty ),
$$
for every $F\in L^2(0,\infty )$.
\begin{lema}\label{Lem2.1}
Let $\lambda >0$. The Littlewood-Paley function $g_\lambda$ is a bounded (sublinear) operator from $L^2(0,\infty )$ into itself.
\end{lema}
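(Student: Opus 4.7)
The plan is to reduce the $L^2$-boundedness of $g_\lambda$ to the Plancherel identity for the Hankel transform, exploiting the factorization $B_\lambda=-D_\lambda^*D_\lambda$ recalled in the introduction together with the identification of $\{P_t^\lambda\}_{t>0}$ with the Poisson semigroup generated by $(-B_\lambda)^{1/2}$ on $L^2(0,\infty)$. The argument I have in mind will in fact show that $g_\lambda$ is, up to a constant factor, an isometry of $L^2(0,\infty)$.

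First, I would apply Fubini's theorem to write
$$
\|g_\lambda(F)\|_{L^2(0,\infty)}^2=\int_0^\infty t\int_0^\infty\left|D_{\lambda,x}P_t^\lambda(F)(x)\right|^2dx\,dt,
$$
and then, for each fixed $t>0$, transfer the derivative $D_\lambda$ onto the second factor by integration by parts. Using $D_\lambda^*D_\lambda=-B_\lambda$, and granted that the boundary terms at $x=0$ and $x=\infty$ vanish, this gives
$$
\int_0^\infty |D_{\lambda,x}P_t^\lambda F(x)|^2dx=-\int_0^\infty P_t^\lambda F(x)\,B_{\lambda,x}P_t^\lambda F(x)\,dx=\|(-B_\lambda)^{1/2}P_t^\lambda F\|_{L^2(0,\infty)}^2=\|\partial_t P_t^\lambda F\|_{L^2(0,\infty)}^2,
$$
where the last equality uses $\partial_tP_t^\lambda=-(-B_\lambda)^{1/2}P_t^\lambda$. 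Thus the $D_\lambda$-Littlewood--Paley function is converted, slice by slice in $t$, into the more familiar $\partial_t$-Littlewood--Paley function.

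Next I would invoke the Hankel transform $\mathcal{H}_{\lambda-1/2}$ of order $\lambda-1/2$, which is an $L^2(0,\infty)$-isometry diagonalising $-B_\lambda$ as multiplication by $y^2$; in particular $\mathcal{H}_{\lambda-1/2}(P_t^\lambda F)(y)=e^{-ty}\mathcal{H}_{\lambda-1/2}F(y)$. Combining Plancherel with Fubini one obtains
$$
\|g_\lambda(F)\|_2^2=\int_0^\infty|\mathcal{H}_{\lambda-1/2}F(y)|^2\left(y^2\int_0^\infty t e^{-2ty}\,dt\right)dy=\tfrac{1}{4}\|\mathcal{H}_{\lambda-1/2}F\|_2^2=\tfrac{1}{4}\|F\|_2^2,
$$
which is the desired inequality (in the sharp form of an equality up to a factor).

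The main point requiring care is the rigorous justification of the identity $\|D_\lambda u\|_2^2=-\langle B_\lambda u,u\rangle$ for $u=P_t^\lambda F$: one has to verify that the boundary contributions at $0$ and $\infty$ vanish, which is delicate near $x=0$ because of the singular term $\lambda(\lambda-1)/x^2$. I would handle this by a density argument, establishing the identity first on a subspace on which the boundary behaviour is transparent (for instance, smooth functions compactly supported in $(0,\infty)$, or finite linear combinations of the Hankel eigenfunctions $\sqrt{xy}\,J_{\lambda-1/2}(xy)$), or alternatively perform the entire computation on the Hankel transform side from the outset, thereby bypassing the boundary analysis altogether.
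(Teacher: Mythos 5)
Your proposal is correct in substance and rests on the same engine as the paper's proof: Plancherel for the Hankel transform, giving the exact identity $\|g_\lambda(F)\|_{L^2(0,\infty)}^2=\tfrac14\|F\|_{L^2(0,\infty)}^2$. The difference is how the derivative $D_{\lambda,x}$ is handled. You convert $\int_0^\infty|D_{\lambda,x}P_t^\lambda F|^2dx$ into $\langle -B_\lambda P_t^\lambda F,P_t^\lambda F\rangle=\|\partial_tP_t^\lambda F\|_{L^2(0,\infty)}^2$ by integration by parts, which forces you to discuss boundary terms at $0$ and $\infty$ (and, implicitly, to identify the differential expression $D_\lambda$ with the closed operator in the factorization $-B_\lambda=D_\lambda^*D_\lambda$ for the self-adjoint realization diagonalized by the Hankel transform); you flag this and propose a density argument or a computation entirely on the transform side. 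The paper avoids the issue altogether: writing $P_t^\lambda F=h_\lambda(e^{-yt}h_\lambda F)$ and using the recurrence $\frac{d}{dz}\bigl(z^{-\nu}J_\nu(z)\bigr)=-z^{-\nu}J_{\nu+1}(z)$ (differentiation under the integral is licit since $e^{-yt}h_\lambda F\in L^1(0,\infty)$ for $t>0$), one gets the intertwining $D_{\lambda,x}P_t^\lambda F=-h_{\lambda+1}\bigl(ye^{-yt}h_\lambda F\bigr)$, and since $h_{\lambda+1}$ is again an $L^2$-isometry the computation finishes with no boundary analysis. So your second fallback (``perform the entire computation on the Hankel transform side'') is precisely the paper's argument; if you keep the integration-by-parts route instead, you still owe the verification that $x^{2\lambda}\,\partial_x\bigl(x^{-\lambda}u\bigr)\,x^{-\lambda}\bar u$ vanishes at both endpoints for $u=P_t^\lambda F$ with general $F\in L^2(0,\infty)$, which is exactly the step your sketch leaves open.
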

\begin{proof}
We consider the Hankel transformation $h_\lambda$ defined by
$$
h_\lambda (F)(x)=\int_0^\infty \sqrt{xy}J_{\lambda -1/2}(xy)F(y)dy,\quad x\in (0,\infty ),
$$
for every $F\in L^1(0,\infty )$. Here $J_\nu$ denotes the Bessel function of the first kind and order $\nu$. The transformation $h_\lambda$ can be extended from $L^1(0,\infty )\cap L^2(0,\infty)$ to $L^2(0,\infty )$ as an isometry of $L^2(0,\infty )$, where $h_\lambda ^{-1}=h_\lambda $ (\cite[Ch. VIII]{Ti}).

Let $F\in L^2(0,\infty )$. According to \cite[(16.1')]{MS} we have  that
$$
P_t^\lambda (F)(x)=h_\lambda (e^{-yt}h_\lambda (F))(x),\quad x,t\in (0,\infty ).
$$
Since $\frac{d}{dz}(z^{-\nu }J_\nu (z))=-z^{-\nu }J_{\nu +1}(z)$, $z\in (0,\infty )$, and $e^{-yt}h_\lambda (F)\in L^1(0,\infty )$, $t>0$, we get
$$
D_{\lambda ,x}P_t^\lambda (F)(x)=-h_{\lambda +1}(ye^{-yt}h_\lambda (F))(x),\quad x,t\in (0,\infty ).
$$
Then,
\begin{align*}
\|g_\lambda (F)\|_{L^2(0,\infty )}^2&=\int_0^\infty \int_0^\infty |tD_{\lambda ,x}P_t^\lambda (F)(x)|^2\frac{dxdt}{t}\\
&=\int_0^\infty t\int_0^\infty |h_{\lambda +1}(ye^{-ty}h_\lambda (F)(y))(x)|^2dxdt\\
&=\int_0^\infty t\int_0^\infty y^2e^{-2ty}|h_\lambda (F)(y)|^2dydt\\
&=\int_0^\infty y^2|h_\lambda (F)(y)|^2\int_0^\infty te^{-2ty}dtdy=\frac{1}{4}\|h_\lambda (F)\|_{L^2(0,\infty )}^2=\frac{1}{4}\|F\|_{L^2(0,\infty )}^2.
\end{align*}
\end{proof}

Lemma \ref{Lem2.1} leads to
\begin{align*}
\frac{1}{|I|}\int_0^{|I|}\int_I|tD_{\lambda ,x}P_t^\lambda (f_1)(x)|^2\frac{dxdt}{t}&\leq \frac{1}{|I|}\int_0^\infty |g_\lambda (f_1)(x)|^2dx\leq \frac{C}{|I|}\int_0^\infty |f_1(y)|^2dy\nonumber\\
&=\frac{C}{|I|}\int_{2I} |f(y)-f_{2I}|^2dy\leq C\|f\|_{BMO_{\rm o}(\mathbb{R})}^2,
\end{align*}
being $C$ independent of $I$ and $f$.
\subsection{Proof of (\ref{2.1}) for $j=2$}\label{subs2.2}
First of all we establish the following estimation for the kernel $D_{\lambda ,x}P_t^\lambda (x,y)$, $x,y,t\in (0,\infty )$.

\begin{lema}\label{Lem2.2}
Let $\lambda >0$. Then,
$$
|D_{\lambda ,x}P_t^\lambda (x,y)|\leq \frac{C}{(x-y)^2+t^2},\quad x,y,t\in (0,\infty ).
$$
\end{lema}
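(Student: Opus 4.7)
The idea is to differentiate $P_t^\lambda(x,y)$ under the integral sign via $D_{\lambda,x}=x^\lambda\partial_x x^{-\lambda}$, exploit a geometric identity that exhibits cancellation in $x-y\cos\theta$, and then estimate the remaining $\theta$-integral via a case split based on the relative sizes of $xy$ and $R:=(x-y)^2+t^2$.

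Applying $D_{\lambda,x}$ to the explicit formula, a direct computation yields
$$D_{\lambda,x}P_t^\lambda(x,y)=-\frac{4\lambda(\lambda+1)(xy)^\lambda t}{\pi}\int_0^\pi\frac{(\sin\theta)^{2\lambda-1}(x-y\cos\theta)}{A^{\lambda+2}}\,d\theta,$$
where $A=A(\theta):=(x-y)^2+t^2+2xy(1-\cos\theta)$. The crucial step is the algebraic identity
$$A=(x-y\cos\theta)^2+y^2\sin^2\theta+t^2,$$
verified by expanding both sides. It gives $|x-y\cos\theta|\le A^{1/2}$, and hence
$$|D_{\lambda,x}P_t^\lambda(x,y)|\le C\,(xy)^\lambda t\int_0^\pi\frac{(\sin\theta)^{2\lambda-1}}{A^{\lambda+3/2}}\,d\theta.$$

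Next I would distinguish two regimes using the elementary bounds $A\ge R$ and $A\ge 4xy\sin^2(\theta/2)$. If $xy\le R$, then $A\sim R$ uniformly in $\theta$, so the integral is bounded by $CR^{-\lambda-3/2}$; combined with $(xy)^\lambda\le R^\lambda$ and $t\le\sqrt R$, this yields $C/R$. If $xy>R$, introduce $\theta_0\in(0,\pi)$ defined by $\sin^2(\theta_0/2)=R/(4xy)$, and split the $\theta$-integral at $\theta_0$. On $(0,\theta_0)$ one has $A\sim R$, and the integral is controlled by a factor $\theta_0^{2\lambda}\sim (R/(xy))^\lambda$ that exactly cancels $(xy)^\lambda$. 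On $(\theta_0,\pi)$ one has $A\sim xy\sin^2(\theta/2)$, so the integrand reduces essentially to $(\sin(\theta/2))^{-4}$, and $\int_{\theta_0}^\pi(\sin(\theta/2))^{-4}\,d\theta\le C\theta_0^{-3}\sim(xy/R)^{3/2}$; after gathering all the factors, one again obtains $Ct/R^{3/2}\le C/R$.

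The main obstacle is identifying the correct use of the numerator $|x-y\cos\theta|$. Crude bounds such as $|x-y\cos\theta|\le|x-y|+2y\sin^2(\theta/2)$ combined with $A\ge R$ and $A\ge 4xy\sin^2(\theta/2)$ raised to various powers fail to close: one invariably leaves behind uncancelled factors of $\sqrt{xy}$ or $t/x$ that are not universally bounded, and in fact the splitting $D_{\lambda,x}P_t^\lambda=\partial_x P_t^\lambda-\tfrac{\lambda}{x}P_t^\lambda$ also breaks down because $\tfrac{1}{x}P_t^\lambda(x,y)$ does \emph{not} admit the claimed bound. The identity $A=(x-y\cos\theta)^2+y^2\sin^2\theta+t^2$, which geometrically expresses $A$ as a squared Euclidean distance, extracts exactly one half-power of $A$ from $|x-y\cos\theta|$, and this is precisely the margin needed to make the two-case analysis work.
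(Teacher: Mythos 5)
Your argument is correct, and it takes a genuinely different route from the paper's. The paper first splits the $\theta$-integral at $\pi/2$ (the piece over $(\pi/2,\pi)$ being trivial since there $A\gtrsim (x-y)^2+t^2+2xy$), and on $(0,\pi/2)$ it handles the numerator via $|x-y\cos\theta|\le |x-y|+\min\{x,y\}(1-\cos\theta)$, which produces two integrals: one estimated exactly (a scaling substitution gives $\int_0^\infty \theta^{2\lambda-1}(R+xy\theta^2)^{-\lambda-2}d\theta\sim (xy)^{-\lambda}R^{-2}$, whence $t|x-y|/R^2\le C/R$), the other by pulling out $R^{3/2}(xy\theta^2)^{\lambda+1/2}$ from the denominator, which cancels $(xy)^\lambda\min\{x,y\}$ up to $\min\{x,y\}/\sqrt{xy}\le 1$. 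Your identity $A=(x-y\cos\theta)^2+y^2\sin^2\theta+t^2$ replaces that two-term decomposition of the numerator with the single clean bound $|x-y\cos\theta|\le A^{1/2}$, after which everything is a matter of estimating $\int_0^\pi(\sin\theta)^{2\lambda-1}A^{-\lambda-3/2}d\theta$ by the dichotomy $xy\lessgtr R$; this is arguably more systematic and reusable (the same identity underlies sharp kernel estimates elsewhere in the Bessel setting), at the cost of introducing the threshold angle $\theta_0$. Your closing remark about the failure of the naive splitting $D_{\lambda,x}=\partial_x-\lambda/x$ is also on point: for small $\lambda$ the term $x^{-1}P_t^\lambda(x,y)\sim x^{\lambda-1}$ as $x\to 0^+$ with $y,t$ fixed, so the cancellation between the two terms is essential, and both your identity and the paper's inequality for $|x-y\cos\theta|$ are devices to exploit it. One small point to tidy up: on $(\theta_0,\pi)$ your reduction of the integrand to $(\sin(\theta/2))^{-4}$ leaves a factor $(\cos(\theta/2))^{2\lambda-1}$, which is unbounded near $\theta=\pi$ when $\lambda<1/2$; it is still integrable there (since $2\lambda-1>-1$) and $A\gtrsim xy$ on, say, $(\pi/2,\pi)$, so the bound $C\theta_0^{-3}$ survives, but you should either restrict the $(\sin(\theta/2))^{-4}$ reduction to $(\theta_0,\pi/2)$ and treat $(\pi/2,\pi)$ separately, or note the integrability of the leftover cosine factor explicitly.
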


\begin{proof}
We write the following decomposition
\begin{align}\label{2.*}
D_{\lambda ,x}P_t^\lambda (x,y)&=-\frac{4\lambda (\lambda +1)}{\pi }(xy)^\lambda t\int_0^\pi \frac{(\sin \theta )^{2\lambda -1}(x-y\cos \theta)}{((x-y)^2+t^2+2xy(1-\cos \theta ))^{\lambda +2}}d\theta \nonumber\\
&=:P_t^{\lambda ,1}(x,y)+P_t^{\lambda ,2}(x,y),\quad x,y, t\in (0,\infty ),
\end{align}
where
$$
P_t^{\lambda ,1}(x,y)=-\frac{4\lambda (\lambda +1)}{\pi }(xy)^\lambda t\int_0^{\pi/2} \frac{(\sin \theta )^{2\lambda -1}(x-y\cos \theta)}{((x-y)^2+t^2+2xy(1-\cos \theta ))^{\lambda +2}}d\theta
,\quad x,y,t\in (0,\infty ).
$$
We have that
\begin{equation}\label{2.3}
|P_t^{\lambda ,2}(x,y)|\leq C(xy)^\lambda t\frac{x+y}{((x-y)^2+t^2+2xy)^{\lambda +2}}\leq \frac{C}{(x-y)^2+t^2},\quad x,y,t\in (0,\infty ).
\end{equation}

On the other hand, since 
$$
|x-y\cos \theta |\leq |x-y|+\min \{x,y\}(1-\cos \theta ), \quad x,y\in (0,\infty ),\; \theta \in \mathbb{R},
$$
and $\sin \theta \sim \theta$ and $2(1-\cos \theta )\sim \theta ^2$, $\theta \in [0,\pi /2]$, it follows that
$$
|P_t^{\lambda ,1}(x,y)|\leq C(P_t^{\lambda ,1,1}(x,y)+P_t^{\lambda ,1,2}(x,y)),\quad x,y,t\in (0,\infty),
$$
where
$$
P_t^{\lambda ,1,1}(x,y)=(xy)^\lambda t|x-y|\int_0^{\pi/2} \frac{\theta ^{2\lambda -1}}{((x-y)^2+t^2+xy\theta ^2)^{\lambda +2}}d\theta
,\quad x,y,t\in (0,\infty ),
$$
and
$$
P_t^{\lambda ,1,2}(x,y)=(xy)^\lambda t\min\{x,y\}\int_0^{\pi/2} \frac{\theta ^{2\lambda +1}}{((x-y)^2+t^2+xy\theta ^2)^{\lambda +2}}d\theta
,\quad x,y,t\in (0,\infty ).
$$
We get
\begin{align*}
P_t^{\lambda ,1,1}(x,y)&\leq C(xy)^\lambda t|x-y|\int_0^\infty \frac{\theta ^{2\lambda -1}}{((x-y)^2+t^2+xy\theta ^2)^{\lambda +2}}d\theta\\
&\leq C\frac{ t|x-y|}{((x-y)^2+t^2)^2}\leq \frac{C}{(x-y)^2+t^2},\quad x,y,t\in (0,\infty ),
\end{align*}
and
\begin{align*}
P_t^{\lambda ,1,2}(x,y)&\leq C\frac{(xy)^\lambda t\min \{x,y\}}{((x-y)^2+t^2)^{3/2}}\int_0^{\pi/2}\frac{\theta ^{2\lambda +1}}{(xy\theta ^2)^{\lambda +1/2}}d\theta\\
&\leq \frac{C}{(x-y)^2+t^2}\frac{\min\{x,y\}}{\sqrt{xy}}\leq \frac{C}{(x-y)^2+t^2},\quad x,y,t\in (0,\infty ).
\end{align*}
Hence,
\begin{equation}\label{2.4}
|P_t^{\lambda ,1}(x,y)|\leq \frac{C}{(x-y)^2+t^2},\quad x,y,t\in (0,\infty ).
\end{equation}

Thus, the result follows from (\ref{2.*}), (\ref{2.3}) and (\ref{2.4}).
\end{proof}

We now proceed as in \cite[pp. 468-469]{BCFR1}. By Lemma \ref{Lem2.2} we can write
\begin{align*}
|D_{\lambda ,x}P_t^\lambda (f_2)(x)|&\leq C\int_{(0,\infty )\setminus 2I}\frac{|f(y)-f_{2I}|}{(x-y)^2+t^2}dy\leq C\int_{(0,\infty )\setminus 2I}\frac{|f(y)-f_{2I}|}{(x_I-y)^2+t^2}dy\\
&\leq \frac{C}{|I|}\sum_{k=1}^\infty \frac{1}{2^k}\left(\frac{1}{2^k|I|}\int_{2^{k+1}I}|f(y)-f_{2^{k+1}I}|dy+|f_{2^{k+1}I}-f_{2I}|\right)\\
&\leq \frac{C}{|I|}\|f\|_{BMO_{\rm o}(\mathbb{R})},\quad x\in I\mbox{ and  }t>0.
\end{align*}
In the last inequality we have taken into account \cite[Ch. VI (1.3)]{G} and that, if $k\in \mathbb{N}\setminus\{0\}$ and $2^k|I|>x_I$, then $2^{k+1}I\subset (0,2^{k+1}|I|)$ and
$$
\int_{2^{k+1}I}|f(y)-f_{2^{k+1}I}|dy\leq \int_0^{2^{k+1}|I|}(|f(y)|+|f_{2^{k+1}I}|)dy\leq 2^{k+3}|I|\|f\|_{BMO_{\rm o}(\mathbb{R})}.
$$
We conclude that
$$
\frac{1}{|I|}\int_0^{|I|}\int_I|tD_{\lambda ,x}P_t^\lambda (f_2)(x)|^2\frac{dxdt}{t}\leq C\|f\|_{BMO_{\rm o}(\mathbb{R})}^2,
$$
with $C$ independent of $I$ and $f$.
\subsection{Proof of (\ref{2.1}) for $j=3$} Note firstly that
$$
|f_{2I}|\leq \frac{1}{|I|}\int_{2I}|f(y)|dy\leq \frac{x_I+|I|}{|I|}\|f\|_{BMO_{\rm o}(\mathbb{R})}.
$$
Then, estimation (\ref{2.1}) for $j=3$ will be proved once we show the following.
\begin{lema}\label{Lem2.3} Let $\lambda >0$. There exists $C>0$ such that
\begin{equation}\label{2.6}
\frac{(x_J+|J|)^2}{|J|^3}\int_0^{|J|}\int_J|tD_{\lambda ,x}P_t^\lambda (1)(x)|^2\frac{dxdt}{t}\leq C,
\end{equation}
for every bounded interval $J$ on $(0,\infty )$.
\end{lema}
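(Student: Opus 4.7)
The key observation is that $u(x,t) := P_t^\lambda(1)(x)$ is scale-invariant: from the homogeneity $P_{\alpha t}^\lambda(\alpha x, \alpha y) = \alpha^{-1} P_t^\lambda(x,y)$, one deduces $u(\alpha x, \alpha t) = u(x,t)$ for every $\alpha > 0$, so $u(x,t) = F_\lambda(x/t)$ for a smooth function $F_\lambda$ on $(0,\infty)$, and consequently
\[
t\, D_{\lambda,x}P_t^\lambda(1)(x) = G_\lambda(x/t), \qquad G_\lambda(r) := F_\lambda'(r) - \tfrac{\lambda}{r}F_\lambda(r).
\]

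The heart of the argument is to prove the two asymptotic bounds
\begin{equation}\label{Gbounds}
|G_\lambda(r)| \leq C\, r^{\lambda+1}\ \text{ as } r\to 0^+,\qquad |G_\lambda(r)| \leq C/r \ \text{ as } r\to \infty.
\end{equation}
For the bound near $0$: expanding the integrand of the Poisson kernel in powers of $x$ and using that $v(x,t) = x^{-\lambda} u(x,t)$ is even in $x$ yields $F_\lambda(r) = K_\lambda r^\lambda(1 + c\, r^2 + O(r^4))$ for constants $K_\lambda, c$. Since $D_\lambda$ annihilates the monomial $r^\lambda$ (i.e.\ $(\partial_r - \lambda/r)(r^\lambda) = 0$), the leading term cancels exactly in the expression for $G_\lambda$, and the next-order contribution gives $G_\lambda(r) = 2K_\lambda c\, r^{\lambda+1} + O(r^{\lambda+3})$. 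For the bound at infinity: substituting the self-similar ansatz $u(x,t)=F_\lambda(x/t)$ into the $\lambda$-harmonic equation $\partial_t^2 u + B_{\lambda,x}u = 0$ produces the ODE
\[
(1+r^2)\, F_\lambda''(r) + 2r\, F_\lambda'(r) - \frac{\lambda(\lambda-1)}{r^2} F_\lambda(r) = 0.
\]
Combined with the boundedness of $F_\lambda$ on $(0,\infty)$ (which follows from pointwise comparison of $P_t^\lambda(x,y)$ with the Euclidean Poisson kernel for $x,y$ large, and direct integration of the explicit formula for $y$ small), this ODE forces $F_\lambda(r)\to L$ for some constant $L$ and $F_\lambda'(r) = O(1/r^2)$ as $r\to\infty$, whence $G_\lambda(r) = -\lambda L/r + O(1/r^2)$.

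Once \eqref{Gbounds} is in hand, the conclusion follows by a change of variables. Writing $J=(a,b)$ and setting $r = x/t$ at fixed $t$,
\[
\int_0^{|J|}\!\!\int_J |t\, D_{\lambda,x}P_t^\lambda(1)(x)|^2 \tfrac{dx\,dt}{t} = \int_0^{|J|}\!\!\int_{a/t}^{b/t} G_\lambda(r)^2\, dr\, dt,
\]
and exchanging the order of integration bounds the right-hand side by a constant multiple of $|J|\!\int_{a/|J|}^\infty G_\lambda(r)^2\, dr/r$. Using \eqref{Gbounds}, the integrand $G_\lambda^2/r$ is $O(r^{2\lambda+1})$ near $0$ and $O(1/r^3)$ at infinity, so $\int_R^\infty G_\lambda(r)^2 dr/r = O(1/R^2)$ for large $R$ and the integral from $0$ is finite. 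Applied with $R = a/|J|$, this yields a bound of order $|J|\cdot |J|^2/(a+|J|)^2 \sim |J|^3/(x_J+|J|)^2$, which rearranges to the desired inequality.

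The main obstacle is establishing the boundedness and convergence of $F_\lambda$ at infinity, because $P_t^\lambda$ is not Markovian and so $P_t^\lambda(1)\not\equiv 1$; one must extract the asymptotic behavior from delicate pointwise estimates on the Bessel–Poisson kernel, separating the regimes where $y$ is close to $x$, much smaller than $t$, or much larger than $x$.
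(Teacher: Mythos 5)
Your argument is correct, but it follows a genuinely different route from the paper. The paper never passes to a self-similar profile: it proves the pointwise estimate $|D_{\lambda ,x}P_t^\lambda (1)(x)|\leq C/(x+t)$ directly, by splitting the kernel $D_{\lambda ,x}P_t^\lambda (x,y)$ as in (\ref{2.*}), estimating the piece coming from $\theta \in (\pi /2,\pi )$ crudely, and handling the local piece through a further decomposition $Q_t^1+Q_t^2+Q_t^3$ in which the dangerous part of $Q_t^2$ is killed by an exact cancellation (the term $I_4$ vanishes because the integrand is odd in $x-y$); the lemma then follows by integrating $t/(x+t)^2$ over the Carleson box, treating $x_J<|J|$ and $x_J\geq |J|$ separately. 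You instead exploit the exact homogeneity $P_{\alpha t}^\lambda (\alpha x,\alpha y)=\alpha ^{-1}P_t^\lambda (x,y)$, which reduces everything to the one-variable profile $F_\lambda$, and your two asymptotic bounds are exactly equivalent to (indeed near $r=0$ stronger than) the paper's bound, since $|tD_{\lambda ,x}P_t^\lambda (1)(x)|=|G_\lambda (x/t)|$ and $C t/(x+t)$ corresponds to $C/(1+r)$. The trade-off: your route replaces the delicate $\theta$-integral manipulations and the cancellation trick by soft structural facts (scaling, evenness and smoothness of $x^{-\lambda}u$ at $x=0$, boundedness of $P_t^\lambda(1)$, and the ODE), at the cost of having to justify the ODE asymptotics; this is easiest if you write the equation in the self-adjoint form $\frac{d}{dr}\left[(1+r^2)F_\lambda '(r)\right]=\frac{\lambda (\lambda -1)}{r^2}F_\lambda (r)$, so that boundedness of $F_\lambda$ makes the right-hand side integrable at infinity, $(1+r^2)F_\lambda '$ converges, and $F_\lambda '(r)=O(r^{-2})$, $F_\lambda \to L$ follow at once. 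Two small points to tighten: in the last step the measure of the $t$-section is $\min \{|J|,|J|/r\}$, so your bound $|J|\int_{a/|J|}^\infty G_\lambda ^2\,dr/r$ is lossy for small $r$ and it is precisely your stronger bound $G_\lambda (r)=O(r^{\lambda +1})$ (which is correct, since $G_\lambda (r)=r^\lambda \frac{d}{dr}\left[r^{-\lambda }F_\lambda (r)\right]$ and $r^{-\lambda}F_\lambda$ is smooth and even at $0$) that makes the integral converge when $a/|J|$ is small; and in the final estimate you should record the two regimes $a/|J|\lesssim 1$ and $a/|J|\gtrsim 1$, i.e.\ use $\int_R^\infty G_\lambda ^2\,dr/r\leq C\min \{1,R^{-2}\}$, which is exactly the dichotomy the paper also runs through.
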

\begin{proof}
We take in mind the decomposition (\ref{2.*}). As in (\ref{2.3}) we get 
$$
|P_t^{\lambda ,2}(x,y)|\leq C\frac{(x+y)t}{(x^2+y^2+t^2)^2}\leq\frac{C}{x^2+y^2+t^2},\quad x,y,t\in (0,\infty ).
$$
Then,
\begin{equation}\label{5.1}
\left|\int_0^\infty P_t^{\lambda ,2}(x,y)dy\right|\leq C\int_0^\infty \frac{dy}{x^2+y^2+t^2}\leq \frac{C}{x+t},\quad x,t\in (0,\infty ).
\end{equation}

Now we write the following spliting
\begin{align*}
\int_0^\infty P_t^{\lambda ,1}(x,y)dy&=\left(\int_0^{x/2}+\int_{x/2}^{3x/2}+\int_{3x/2}^\infty \right)P_t^{\lambda ,1}(x,y)dy\\
&=:Q_t^1(x)+Q_t^2(x)+Q_t^3(x),\quad x\in (0,\infty ).
\end{align*}
According to (\ref{2.4}) we get
$$
|Q_t^1(x)|\leq C\int_0^{x/2}\frac{dy}{(x-y)^2+t^2}\leq C\frac{x}{x^2+t^2}\leq \frac{C}{x+t},\quad x,t\in (0,\infty ),
$$
and
$$
|Q_t^3(x)|\leq C\int_{3x/2}^\infty \frac{dy}{(y-x+t)^2}\leq \frac{C}{x+t},\quad x,t\in (0,\infty ).
$$

We decompose $Q_t^2(x)$, $t,x\in (0,\infty )$, in the following way.
\begin{align*}
Q_t^2(x)&=-\frac{4\lambda (\lambda +1)}{\pi }t\int_{x/2}^{3x/2}(xy)^\lambda y\int_0^{\pi/2}\frac{(1-\cos \theta )(\sin \theta )^{2\lambda -1}}{((x-y)^2+t^2+2xy(1-\cos \theta ))^{\lambda +2}}d\theta dy\\
&\hspace{0.5cm}-\frac{4\lambda (\lambda +1)}{\pi }t\int_{x/2}^{3x/2}(xy)^\lambda (x-y)\\
&\hspace{0.5cm}\times \int_0^{\pi/2}\left(\frac{(\sin \theta )^{2\lambda -1}}{((x-y)^2+t^2+2xy(1-\cos \theta ))^{\lambda +2}}-\frac{\theta ^{2\lambda -1}}{((x-y)^2+t^2+xy\theta^2 )^{\lambda +2}}\right)d\theta dy\\
&\hspace{0.5cm}+\frac{4\lambda (\lambda +1)}{\pi }t\int_{x/2}^{3x/2}(xy)^\lambda (x-y)\int_{\pi/2}^\infty \frac{\theta ^{2\lambda -1}}{((x-y)^2+t^2+xy\theta^2 )^{\lambda +2}}d\theta dy\\
&\hspace{0.5cm}-\frac{4\lambda (\lambda +1)}{\pi }t\int_{x/2}^{3x/2}(xy)^\lambda (x-y)\int_0^\infty \frac{\theta ^{2\lambda -1}}{((x-y)^2+t^2+xy\theta^2 )^{\lambda +2}}d\theta dy\\
&=:\sum_{j=1}^4I_j(x,t),\quad x,t\in (0,\infty ).
\end{align*}
Observe firstly that $I_4(x,t)=0$, $t,x\in (0,\infty )$. Indeed, we have that
\begin{align*}
I_4(x,t)&=-\frac{4\lambda (\lambda +1)}{\pi }t\int_0^\infty \frac{u^{2\lambda -1}}{(1+u^2)^{\lambda +2}}du\int_{x/2}^{3x/2}
\frac{(xy)^\lambda (x-y)}{(xy)^\lambda ((x-y)^2+t^2)^2 }dy\\
&=-\frac{2t}{\pi}\int_{-x/2}^{x/2}\frac{z}{(z^2+t^2)^2}dz=0,\quad x,t\in (0,\infty ).
\end{align*}
We are going to see that
$$
|I_j(x,t)|\leq \frac{C}{x+t},\quad x,t\in (0,\infty )\mbox{ and }j=1,2,3.
$$

Since $2(1-\cos \theta)\sim \theta ^2$ and $\sin \theta \sim \theta$, when $\theta \in [0,\pi /2]$,  we can write 
\begin{align*}
|I_1(x,t)|&\leq Ctx^{2\lambda +1}\int_{x/2}^{3x/2}\int_0^{\pi /2}\frac{\theta ^{2\lambda +1}}{((x-y)^2+t^2+(x\theta )^2 )^{\lambda +2}}d\theta dy\\
&\leq Ct\int_{x/2}^{3x/2}\int_0^\infty\frac{d\theta dy}{((x-y)^2+t^2+(x\theta )^2 )^{3/2}}\\
&\leq C\frac{t}{x}\int_{x/2}^{3x/2}\frac{dy}{(x-y)^2+t^2}\leq C\frac{t}{x}\int_0^{x/2}\frac{dz}{z^2+t^2}\\
&\leq C\frac{t}{x}\int_0^{x/2}\frac{dz}{(z+t)^2}\leq\frac{C}{x+t},\quad x,t\in (0,\infty ).
\end{align*}
Also,
\begin{align*}
|I_3(x,t)|&\leq Ctx^{2\lambda }\int_{x/2}^{3x/2}\int_{\pi /2}^\infty \frac{\theta ^{2\lambda -1}}{((x-y)^2+t^2+(x\theta)^2 )^{\lambda +3/2}}d\theta dy\\
&\leq Ct\int_{x/2}^{3x/2}\frac{1}{((x-y)^2+t^2)^{3/2}}\int_{\frac{\pi}{2}\frac{x}{\sqrt{(x-y)^2+t^2}}}^\infty \frac{u^{2\lambda -1}}{(1+u^2)^{\lambda+3/2}}dudy\\
&\leq C\frac{t}{x}\int_{x/2}^{3x/2}\frac{\sqrt{(x-y)^2+t^2}}{((x-y)^2+t^2)^{3/2}}dy\int_0^\infty \frac{u^{2\lambda }}{(1+u^2)^{\lambda+3/2}}du\\
&\leq C\frac{t}{x}\int_{x/2}^{3x/2}\frac{dy}{(x-y)^2+t^2}\leq \frac{C}{x+t},\quad x,t\in (0,\infty ).
\end{align*}
By using that $|(\sin \theta  )^{2\lambda -1}-\theta ^{2\lambda -1}|\leq C\theta ^{2\lambda +1}$, $\theta \in (0,\pi /2)$, and that
$$
\left|\frac{1}{((x-y)^2+t^2+2xy(1-\cos \theta ))^{\lambda +2}}-\frac{1}{((x-y)^2+t^2+xy\theta^2 )^{\lambda +2}}\right|\leq C\frac{xy\theta ^4}{((x-y)^2+t^2+xy\theta^2 )^{\lambda +3}},
$$
for each $\theta \in (0,\pi /2)$ and $t,x,y\in (0,\infty )$, we obtain
\begin{align*}
\left|\frac{(\sin \theta )^{2\lambda -1}}{((x-y)^2+t^2+2xy(1-\cos \theta ))^{\lambda +2}}-\frac{\theta ^{2\lambda -1}}{((x-y)^2+t^2+xy\theta^2 )^{\lambda +2}}\right|&\\
&\hspace{-8cm}\leq C\left(\frac{\theta ^{2\lambda +1}}{((x-y)^2+t^2+2xy(1-\cos \theta ))^{\lambda +2}}+\theta ^{2\lambda -1}\frac{xy\theta ^4}{((x-y)^2+t^2+xy\theta ^2)^{\lambda +3}}\right)\\
&\hspace{-8cm}\leq C\frac{\theta ^{2\lambda +1}}{((x-y)^2+t^2+xy\theta ^2)^{\lambda +2}}, \quad \theta \in \Big(0,\frac{\pi }{2}\Big), \;x,y,t\in (0,\infty ).
\end{align*}
Then,
\begin{align*}
|I_2(x,t)|&\leq Ctx^{2\lambda }\int_{x/2}^{3x/2}|x-y|\int_0^{\pi /2}\frac{\theta ^{2\lambda +1}}{((x-y)^2+t^2+(x\theta )^2)^{\lambda +2}}d\theta dy\\
&\leq Ctx^{2\lambda }\int_{x/2}^{3x/2}\int_0^{\pi /2}\frac{\theta ^{2\lambda +1}}{((x-y)^2+t^2+(x\theta )^2)^{\lambda +3/2}}d\theta dy\\
&\leq Ctx^{2\lambda }\int_{x/2}^{3x/2}\frac{dy}{(x-y)^2+t^2}\int_0^{\pi /2}\frac{\theta ^{2\lambda +1}}{(x\theta )^{2\lambda +1}}d\theta\\
&\leq C\frac{t}{x}\int_{x/2}^{3x/2}\frac{dy}{(x-y)^2+t^2}\leq \frac{C}{x+t},\quad x,t\in (0,\infty ).
\end{align*}
We conclude that
$$
|Q_t^2(x)|\leq \frac{C}{x+t},\quad x,t\in (0,\infty ).
$$
Hence, 
\begin{equation}\label{7.1}
\left|\int_0^\infty P_t^{\lambda ,1}(x,y)dy\right|\leq \frac{C}{x+t},\quad x,t\in (0,\infty ).
\end{equation}

Let $J$ a bounded interval in $(0,\infty )$. If $x_J<|J|$, we obtain by (\ref{5.1}) and (\ref{7.1})
\begin{align*}
\frac{(x_J+|J|)^2}{|J|^3}\int_0^{|J|}\int_Jt\left|\int_0^\infty |D_{\lambda ,x}P_t^\lambda(x,y)dy\right|^2dxdt&\leq \frac{C}{|J|}\int_0^{|J|}\int_0^{2|J|}\frac{t}{(x+t)^2}dxdt\\
&\hspace{-5.5cm}\leq \frac{C}{|J|}\int_0^{|J|}\frac{dt}{\sqrt{t}}\int_0^{2|J|}\frac{dx}{\sqrt{x}}\leq C.
\end{align*}
If $x_J\geq |J|$, again by (\ref{5.1}) and (\ref{7.1}) we can write
\begin{align*}
\frac{(x_J+|J|)^2}{|J|^3}\int_0^{|J|}\int_Jt\left|\int_0^\infty |D_{\lambda ,x}P_t^\lambda(x,y)dy\right|^2dxdt&\leq C\frac{(x_J+|J|)^2}{|J|^3}\int_0^{|J|}tdt\int_{x_J-|J|/2}^{x_J+|J|/2}\frac{dx}{x^2}\\
&\hspace{-4cm}\leq C\frac{(x_J+|J|)^2}{|J|}\left(\frac{1}{x_J-|J|/2}-\frac{1}{x_J+|J|/2}\right)\leq C\frac{(x_J+|J|)^2}{(x_J+|J|/2)(x_J-|J|/2)}\\
&\hspace{-4cm}\leq C\frac{x_J+|J|}{2x_J-|J|}\leq C.
\end{align*}
Note that the constant $C$ does not depend on $J$. Thus, (\ref{2.6}) is established.
\end{proof}
By considering Lemma \ref{Lem2.3} and the estimate for $|f_{2I}|$ we deduce that
$$
\frac{1}{|I|}\int_0^{|I|}\int_I|tD_{\lambda ,x}P_t^\lambda (f_3)(x)|^2\frac{dxdt}{t}\leq C\|f\|_{BMO_{\rm o}(\mathbb{R})}^2.
$$

Property (\ref{2.1}) is established and we conclude that $\rho _f$ is a Carleson measure on $(0,\infty )\times (0,\infty )$ and
$$
\|\rho _f\|_\mathcal{C}\leq C\|f\|_{BMO_{\rm o}(\mathbb{R})}^2.
$$
Thus the proof of $(i)\Longrightarrow (ii)$ in Theorem \ref{th1.4} is finished.

\section{Proof of $(ii)\Longrightarrow (i)$ in Theorem \ref{th1.4}}\label{sec: (ii)(i)}

We start this section showing the following characterization of $BMO_{\rm o}(\R)$ which we need later. Its proof follows the arguments in \cite[Theorem 1.1]{BCFR1} with minor modifications.

\begin{lema}\label{equiv}
Let $\lambda >0$. Suppose $f\in L^1_{\rm loc}[0,\infty)$. Then, the following assertions are equivalent.
\begin{enumerate}
\item[$(i)$] $f \in BMO_{\rm o}(\R)$.
\item[$(ii)$] $x^\lambda (1+x^2)^{-\lambda -1}f \in L^1(0,\infty)$ and
$$
d\gamma_f(x,t)=\left|t \partial _tP_t^\lambda(f)(x)\right|^2 \frac{dxdt}{t}
$$
is a Carleson measure on $(0,\infty)\times(0,\infty)$.
\end{enumerate}
Moreover, the quantities $\|f\|^2_{BMO_{\rm o}(\R)}$ and $\|\gamma_f\|_{\mathcal{C}}$ are equivalent.
\end{lema}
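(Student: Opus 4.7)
The strategy is to reduce both implications to the already established Theorem \ref{th1.3}, by comparing the two integrability weights against each other and by observing that the weight enters the reverse implication only through the well-posedness of the Bessel--Poisson integral. From the elementary bound $1+x^2\geq 2x$ one gets
$$
\frac{x^\lambda}{(1+x^2)^{\lambda+1}}\leq \frac{2^{-\lambda}}{1+x^2},\qquad x>0,\ \lambda>0,
$$
so the weight appearing in Lemma \ref{equiv} is pointwise dominated by that of Theorem \ref{th1.3}. Therefore $(i)\Longrightarrow (ii)$ is immediate: if $f\in BMO_{\rm o}(\mathbb R)$ then $(1+x^2)^{-1}f\in L^1(0,\infty)$, which forces the weaker integrability in $(ii)$, and the Carleson property together with the bound $\|\gamma_f\|_\mathcal{C}\leq C\|f\|_{BMO_{\rm o}(\mathbb R)}^2$ is supplied by Theorem \ref{th1.3}.

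For $(ii)\Longrightarrow (i)$, I would revisit the proof of the analogous implication in \cite[Theorem 1.1]{BCFR1} and verify that the integrability of $(1+x^2)^{-1}f$ is used there only to ensure that $P_t^\lambda(f)(x)=\int_0^\infty P_t^\lambda(x,y)f(y)\,dy$ is absolutely convergent at every $(x,t)\in(0,\infty)^2$, and to justify certain passages to the limit as $t\to 0^+$. The crucial ingredient to adapt the argument to the weaker hypothesis is a pointwise kernel estimate
$$
P_t^\lambda(x,y)\leq C(x,t)\,\frac{y^\lambda}{(1+y^2)^{\lambda+1}},\qquad y>0,
$$
for each fixed $x,t>0$. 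This can be deduced from the explicit integral representation of $P_t^\lambda(x,y)$ by bounding the denominator $(x-y)^2+t^2+2xy(1-\cos\theta)$ from below separately in the regions $y\lesssim 1$, $y\sim x$ and $y\gtrsim 1+x$, in the same spirit as Lemma \ref{Lem2.2}. With this bound at hand, the hypothesis $x^\lambda(1+x^2)^{-\lambda-1}f\in L^1(0,\infty)$ already makes $P_t^\lambda(f)$ well defined and permits every dominated-convergence step needed.

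Once $P_t^\lambda(f)$ is realised as an absolutely convergent integral, the remainder of the proof follows \cite[Theorem 1.1]{BCFR1} essentially verbatim: one combines the $L^2$-boundedness of a vertical $g$-function in the spirit of Lemma \ref{Lem2.1}, the Carleson hypothesis on $d\gamma_f$, and a standard localization scheme to extract the estimate $\|f\|_{BMO_{\rm o}(\mathbb R)}^2\leq C\|\gamma_f\|_\mathcal{C}$, which also closes the equivalence of the two quantities.

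I expect the main obstacle to be the careful accounting needed to confirm that every estimate of \cite{BCFR1} originally phrased with the weight $(1+y^2)^{-1}$ can be reproduced via the above kernel bound using only the weaker weight $y^\lambda(1+y^2)^{-\lambda-1}$; once this bookkeeping is checked, no genuinely new analytic difficulty arises and the promised "minor modifications" become explicit.
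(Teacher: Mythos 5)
Your proposal is correct and follows essentially the same route as the paper: $(i)\Rightarrow(ii)$ is exactly the pointwise domination $x^\lambda(1+x^2)^{-\lambda-1}\leq C(1+x^2)^{-1}$ combined with Theorem~\ref{th1.3}, and $(ii)\Rightarrow(i)$ is obtained by rerunning the argument of \cite[Section 4]{BCFR1} after checking that the relevant kernels decay like $z^\lambda(1+z^2)^{-\lambda-1}$. The only thing the paper makes explicit that you defer to ``bookkeeping'' is that the decisive estimates are not for $P_t^\lambda(x,y)$ alone but for the composite quantities $\int_0^\infty|t\partial_tP_t^\lambda(y,z)\,\partial_tP_t^\lambda(a)(y)|\,dy$ and $\sup_{t>0}|M_t^\lambda(a)(z)|$ tested against atoms $a$, both of which are shown to be bounded by $Cz^\lambda(1+z^2)^{-\lambda-1}$, precisely as your plan anticipates.
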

\begin{proof}
$(i)\Rightarrow (ii)$. It follows from Theorem \ref{th1.3}.

$(ii)\Rightarrow (i)$. We can proceed as in \cite[Section 4]{BCFR1} by establishing the result in \cite[Proposition 4.4]{BCFR1} for the new conditions on $f$. 
Actually, we only have to take into account the following estimations.

Let $a$ be an (odd)-atom, that is, a measurable function satisfying one of the next properties:

(a) $a=\delta ^{-1}\chi _{(0,\delta)}$, for some $\delta >0$;

(b) there exists a bounded interval $I\subset (0,\infty )$ such that supp $a\subset I$, $\int_Ia(x)dx=0$ and $\|a\|_{L^\infty (0,\infty )}\leq |I|^{-1}$.

We have that 
\begin{equation}\label{x1}
\int_0^\infty |t\partial _tP_t^\lambda (y,z)\partial _tP_t^\lambda (a)(y)|dy\leq C\frac{z^\lambda }{(1+z^2)^{\lambda +1}},\quad z,t\in (0,\infty ),
\end{equation}
with $C$ independent of $z$.

Indeed, since
\begin{align*}
\partial_tP_t^\lambda (x,y)&=\frac{2\lambda }{\pi }(xy)^\lambda \left[\int_0^\pi \frac{(\sin \theta)^{2\lambda -1}}{((x-y)^2+t^2+2xy(1-\cos \theta ))^{\lambda +1}}d\theta \right.\\
&\hspace{0.5cm}\left.-2(\lambda +1) t^2\int_0^\pi \frac{(\sin \theta )^{2\lambda -1}}{((x-y)^2+t^2+2xy(1-\cos \theta ))^{\lambda +2}}d\theta \right],\quad x,y,t\in (0,\infty ),
\end{align*}
we get
\begin{equation}\label{3.11*}
\left|\partial _tP_t^\lambda (x,y)\right|\leq C\int_0^\pi \frac{(xy)^\lambda (\sin \theta)^{2\lambda -1}}{((x-y)^2+t^2+2xy(1-\cos \theta ))^{\lambda +1}}d\theta\leq
C\frac{(xy)^\lambda }{((x-y)^2+t^2)^{\lambda +1}},\quad x,y,t\in (0,\infty ),
\end{equation} 
and also
\begin{equation}\label{3.11**}
\left|\partial _tP_t^\lambda (x,y)\right|\leq \frac{C}{(x-y)^2+t^2},\quad x,y,t\in (0,\infty ).
\end{equation} 
Assume that supp $a\subset (0,\alpha)$ for certain $\alpha >0$. Then, 
\begin{align*}
\left|\partial _tP_t^\lambda (a)(y)\right|&\leq \|a\|_{L^\infty (0,\infty )}\int_0^\alpha \frac{(yz)^\lambda }{((y-z)^2+t^2)^{\lambda +1}}dz\\
&\leq Cy^\lambda \left\{\begin{array}{ll}
			\displaystyle t^{-2\lambda -2}\int_0^\alpha z^\lambda dz,&0<y\leq 2\alpha,\\
			& \\
			\displaystyle  (y^2+t^2)^{-\lambda -1}\int_0^\alpha z^\lambda dz,&y\geq 2\alpha,
			\end{array}
\right.\\
&\leq  C\frac{y^\lambda }{(1+y^2)^{\lambda +1}},\quad y,t\in (0,\infty ),
\end{align*}
where $C>0$ does not depend on $y$.
Hence, by using \eqref{3.11*} and \eqref{3.11**} it follows that
\begin{align*}
\int_0^\infty |t\partial _tP_t^\lambda (y,z)\partial _tP_t^\lambda (a)(y)|dy&\\
&\hspace{-4cm}\leq C\left[\left(\int_0^{z/2}+\int_{2z}^\infty \right)\frac{t(yz)^\lambda }{((y-z)^2+t^2)^{\lambda +1}}\frac{
y^\lambda}{(1+y^2)^{\lambda +1}}dy+\int_{z/2}^{2z}\frac{t}{(y-z)^2+t^2}\frac{y^\lambda}{(1+y^2)^{\lambda +1}}dy\right]\\
&\hspace{-4cm}\leq Ctz^\lambda \left[\frac{1}{(z^2+t^2)^{\lambda +1}}\int_0^{z/2}\frac{y^{2\lambda }}{(1+y^2)^{\lambda +1}}dy+\frac{1}{(1+z^2)^{\lambda +1}}\int_{2z}^\infty\frac{y^{2\lambda }}{(y^2+t^2)^{\lambda +1}}dy\right.\\
&\hspace{-3.8cm}\left. +\frac{1}{(1+z^2)^{\lambda +1}}\int_{z/2}^{2z}\frac{dy}{(x-y)^2+t^2}dy\right]\leq C\frac{z^\lambda }{(1+z^2)^{\lambda +1}},\quad z,t\in (0,\infty ).
\end{align*}
Here, the constant $C$ can depend on $t$, but is independent of $z$.

On the other hand we need to estimate $\sup _{t>0}|M_t^\lambda (a)(z)|$, $z\in (0,\infty )$, where
$$
M_t^\lambda (a)=\frac{1}{4}\Big[t\partial _vP_{2v}^\lambda (a)_{|v=t}-P_{2t}^\lambda (a)\Big],\quad t\in (0,\infty ).
$$

According to \cite[p. 492]{BCFR1} we have that
$$
\sup _{t>0}|M_t^\lambda (a)(z)|\leq C\left\{\begin{array}{ll}
						1,&0<z\leq 2\alpha ,\\
						z^{-\lambda -2},&z\geq  2\alpha.
						\end{array}
\right.
\leq C\left\{\begin{array}{ll}
						1,&0<z\leq 2\alpha ,\\
						\displaystyle \frac{z^\lambda }{(1+z^2)^{\lambda +1}},&z\geq  2\alpha ,
						\end{array}
\right.
$$
which allows us to obtain
\begin{equation}\label{x2}
\int_0^\infty |f(z)|\sup _{t>0}|M_t^\lambda (a)(z)|dz\leq \int_0^{2\alpha }|f(z)|dz+\int_{2\alpha }^\infty \frac{z^\lambda |f(z)|}{(1+z^2)^{\lambda +1}}dz<\infty .
\end{equation}

By using \eqref{x1} and \eqref{x2} and proceeding as in \cite[Section 4]{BCFR1} we conclude our result.
\end{proof}

Assume that $u$ is a $\lambda$-harmonic function on $(0,\infty )\times (0,\infty )$ such that $x^{-\lambda} u(x,t)\in C^\infty (\mathbb{R}\times (0,\infty ))$ is even in the $x$-variable and that the measure
$$
d\mu _\lambda (x,t)=|t\nabla _\lambda u(x,t)|^2\frac{dxdt}{t}
$$
is Carleson on $(0,\infty )\times (0,\infty )$.

The function $u$ satisfies the equation
$$
\partial_t^2u+\partial _x^2u-\frac{\lambda (\lambda -1)}{x^2}u=0,
$$
in a weak sense on $\mathbb{R}\times (0,\infty )$, that is, for every $\phi \in C_c^\infty (\mathbb{R}\times (0,\infty ))$, the space of smooth functions having compact support on $\mathbb{R}\times (0,\infty )$,
\begin{equation}\label{3.1}
0=\int_{\mathbb{R}\times (0,\infty )}\left(\partial _tu(x,t)\partial_t\phi (x,t)+\partial_ xu(x,t)\partial_x\phi (x,t)+\frac{\lambda (\lambda -1)}{x^2}u(x,t)\phi (x,t)\right)dxdt.
\end{equation}

Indeed, let $\phi \in C_c^\infty (\mathbb{R}\times (0,\infty ))$. We choose $0<a<\infty$ and $0<b_1<b_2<\infty$ such that ${\rm supp}(\phi )\subset [-a,a]\times [b_1,b_2]$ and define $v(x,t)=x^{-\lambda} u(x,t)$, $(x,t)\in \mathbb{R}\times (0,\infty )$. 

Since $v\in C^2(\mathbb{R}\times (0,\infty ))$ and $\lambda >1$, we have that $\frac{u}{x^2}$, $\partial_x u$ and  $\partial _ tu$ are in $L^1_{\rm loc}(\mathbb{R}\times (0,\infty ))$, and $\lim_{x\rightarrow 0} \partial_xu(x,t)=0$, for every $t\in (0,\infty )$.
Moreover, 
$$
\partial _t^2u(x,t)+\partial _x^2u(x,t)-\frac{\lambda (\lambda -1)}{x^2}u(x,t)=0, \quad (x,t)\in (\mathbb{R}\setminus \{0\})\times (0,\infty ).
$$

Then, we can write
 \begin{align*}
      \int_{\mathbb{R}\times (0,\infty )}\left(\partial_tu(x,t)\partial_t\phi (x,t)+\partial_xu(x,t) \partial_x\phi (x,t)+\frac{\lambda (\lambda -1)}{x^2}u(x,t)\phi (x,t)\right)dxdt\\
            &\hspace{-11cm}=\lim_{\varepsilon \rightarrow 0^+}\int_{b_1}^{b_2}\left(\int_{-a}^{-\varepsilon}+\int_\varepsilon ^a\right)\left(\partial_tu(x,t) \partial_t\phi (x,t)+ \partial_ xu(x,t)\partial_x\phi (x,t)+\frac{\lambda (\lambda -1)}{x^2}u(x,t)\phi (x,t)\right)dxdt\\
 &\hspace{-11cm}=\lim_{\varepsilon \rightarrow 0^+}\int_{b_1}^{b_2}\left(\int_{-a}^{-\varepsilon}+\int_\varepsilon ^a\right)\left(-\partial_t^2u(x,t)- \partial_x^2u(x,t)+\frac{\lambda (\lambda -1)}{x^2}u(x,t)\right)\phi (x,t)dxdt=0.
    \end{align*}
Since (\ref{3.1}) holds, by proceeding as in \cite[Lemma 2.6]{DYZ} (see also \cite[Lemma 2.1]{Sh}) we can prove that the function $u^2$ is subharmonic in $\mathbb{R}\times (0,\infty )$. Hence, for every $x_0\in \mathbb{R}$, $t_0\in (0,\infty )$ and $0<r<t_0$,
$$
|u(x_0,t_0)|\leq \left(\frac{1}{\pi r^2}\int_{B((x_0,t_0),r)}|u(x,t)|^2dxdt\right)^{1/2}.
$$
It is clear that $\partial _tu$ satisfies the same properties than $u$. Then, for every $x_0\in \mathbb{R}$, $t_0\in (0,\infty )$ and $0<r<t_0$,
$$
|\partial_tu(x_0,t_0)|\leq \left(\frac{1}{\pi r^2}\int_{B((x_0,t_0),r)}|\partial_tu(x,t)|^2dxdt\right)^{1/2}.
$$

Since the measure $t|\partial _tu(x,t)|^2dxdt$ is Carleson on $(0,\infty )\times (0,\infty )$ we have that, for every $x_0,t_0\in (0,\infty )$,
\begin{align}\label{3.2}
|(\partial_s u(x_0,s))_{|s=t_0}|&\leq C\left(\frac{1}{t_0^2}\int_{B((x_0,t_0),t_0/2)}|\partial_tu(x,t)|^2dxdt\right)^{1/2}\leq C\left(\frac{1}{t_0^2}\int_{t_0/2}^{3t_0/2}\int_{x_0-t_0/2}^{x_0+t_0/2}|\partial_tu(x,t)|^2dxdt\right)^{1/2}\nonumber\\
&\leq \frac{C}{t_0}\left(\frac{1}{t_0}\int_0^{3t_0/2}\int_{I(x_0,t_0)}t|\partial_tu(x,t)|^2dxdt\right)^{1/2}\leq \frac{C}{t_0}\|t|\partial_tu(x,t)|^2dxdt\|_{\mathcal{C}}^{1/2},
\end{align}
where $I(x_0,t_0)=(x_0-\frac{3t_0}{4},x_0+\frac{3t_0}{4})\cap (0,\infty )$. We have used that $|\partial_tu(x,t)|=|\partial_tu(-x,t)|$, $x\in \mathbb{R}$ and $t\in (0,\infty )$.

From (\ref{3.2}) we deduce that, for every $t_0>0$, there exists $C>0$ such that 
\begin{equation}\label{9.1}
|\partial_tu(x,t)|\leq C, \quad x\in \mathbb{R} \mbox{ and }t\geq t_0.
\end{equation}

Our next objective is to show that, for every $t_0>0$,
\begin{equation}\label{3.3}
\partial_t u(x,t+t_0)=P_t^\lambda ((\partial _s u (\cdot , s))_{|s=t_0})(x),\quad x,t\in (0,\infty ).
\end{equation}

In order to see this property we establish previously some results.
\begin{lema}\label{Lem3.1}
Let $\lambda >0$. Suppose that $f$ is a continuous function on $(0,\infty )$ such that 
$$
\int_0^\infty \frac{y^\lambda |f(y)|}{(1+y^2)^{\lambda +1}}dy<\infty .
$$
Then, the function
$$
v(x,t)=\left\{\begin{array}{ll}
		P_t^\lambda (f)(x),&x,t\in (0,\infty ),\\
		f(x),&x\in (0,\infty ),\;t=0
		\end{array}
\right.
$$
is $\lambda$-harmonic in $(0,\infty )\times (0,\infty )$ and continuous in $(0,\infty )\times [0,\infty )$.
\end{lema}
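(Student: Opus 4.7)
The plan is to carry out the proof in three stages: deriving pointwise bounds on the Bessel--Poisson kernel and its derivatives, verifying $\lambda$-harmonicity of $v$ on $(0,\infty)\times (0,\infty)$ by differentiation under the integral sign, and finally proving continuity up to the boundary $t = 0$, which I expect to be the main obstacle.

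First, I would establish an estimate
$$
P_t^\lambda(x,y)\leq C(x,t)\,\frac{y^\lambda}{(1+y^2)^{\lambda+1}},\quad y>0,
$$
with $C(x,t)$ locally uniformly bounded on $(0,\infty)\times (0,\infty)$, together with analogous bounds for the first and second $(x,t)$-derivatives of $P_t^\lambda(x,y)$. These follow from the explicit integral representation of the kernel by splitting $y$ into the three regions $y\leq x/2$, $x/2<y<2x$, $y\geq 2x$ and estimating the $\theta$-integral in the spirit of Lemma \ref{Lem2.2}. Combined with the hypothesis $y^\lambda(1+y^2)^{-\lambda-1}f\in L^1(0,\infty)$, dominated convergence will then give $v\in C^2((0,\infty)\times (0,\infty))$ with all $(x,t)$-derivatives passing inside the $y$-integral.

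Second, a direct differentiation of the integral representation of $P_t^\lambda(x,y)$ shows that $(\partial_t^2+B_{\lambda,x})P_t^\lambda(\cdot,y)=0$ for every fixed $y>0$; it suffices to verify that $\partial_t^2+B_{\lambda,x}$ annihilates each building block $((x-y)^2+t^2+2xy(1-\cos\theta))^{-\lambda}$, a routine computation. Combined with the interchange from the first stage, this yields $\lambda$-harmonicity of $v$. Continuity of $v$ at interior points $(x_0,t_0)$ with $t_0>0$ is also an immediate consequence of the kernel bounds and dominated convergence.

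The main obstacle is continuity at a boundary point $(x_0,0)$ with $x_0>0$, complicated by the non-Markovian character of $\{P_t^\lambda\}_{t>0}$ (so one cannot trivially reduce to $P_t^\lambda(f-f(x_0))(x)$). My strategy is to decompose $f=g_1+g_2$ with $g_1=f\chi_{(x_0/2,\,2x_0)}$. For $g_2$, the off-diagonal estimate
$$
P_t^\lambda(x,y)\leq C\,\frac{t(xy)^\lambda}{((x-y)^2+t^2)^{\lambda+1}}
$$
combined with the integrability hypothesis on $f$ yields $P_t^\lambda(g_2)(x)\to 0$ as $(x,t)\to(x_0,0)$. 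For $g_1$, which is bounded, continuous at $x_0$, and compactly supported away from the origin, I would invoke the classical approximation-of-identity property of the Bessel--Poisson kernel: on the compact region $[x_0/2,2x_0]$ the non-Euclidean correction factors in $P_t^\lambda(x,y)$ are smooth and bounded above and below, so the kernel mass concentrates at $y=x$ as $t\to 0^+$ in the same manner as the classical Poisson kernel, giving $P_t^\lambda(g_1)(x)\to f(x_0)$ as $(x,t)\to(x_0,0)$. Adding the two contributions completes the boundary continuity.
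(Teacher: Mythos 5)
Your overall architecture (justify differentiation under the integral, check the kernel is $\lambda$-harmonic, then localize for the boundary limit) is reasonable, and your treatment of the far part $g_2$ via the off-diagonal bound $P_t^\lambda (x,y)\leq Ct(xy)^\lambda((x-y)^2+t^2)^{-\lambda -1}$ together with the weighted integrability of $f$ is correct. But there are two genuine problems.

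First, a secondary one: the claim that $\partial_t^2+B_{\lambda ,x}$ annihilates each $\theta$-slice of the kernel is false. If you apply $\partial _t^2+\partial_x^2-\lambda(\lambda-1)x^{-2}$ to $(xy)^\lambda t\,w^{-\lambda-1}$ with $w=(x-y)^2+t^2+2xy(1-\cos\theta)$, you are left with a nonzero residual (e.g.\ at $\theta=\pi/2$ it is a nonzero multiple of $y^2(xy)^\lambda t\,w^{-\lambda-3}$); the cancellation only occurs after integrating against $(\sin\theta)^{2\lambda-1}d\theta$, via an integration by parts in $\theta$. This is fixable, and the cleanest route is the one the paper takes: use the subordination formula $P_t^\lambda(F)=h_\lambda(e^{-yt}h_\lambda(F))$ from \cite[(16.1')]{MS}, from which $\lambda$-harmonicity is immediate.

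Second, and more seriously, the argument for the local part $g_1$ does not close. Knowing that the ``non-Euclidean correction factors'' are bounded above and below on $[x_0/2,2x_0]$ only tells you that $\limsup$ and $\liminf$ of $P_t^\lambda(g_1)(x)$ are \emph{comparable} to $f(x_0)$, not equal to it. To conclude $P_t^\lambda(g_1)(x)\to f(x_0)$ you must show that the total kernel mass satisfies $\int_0^\infty P_t^\lambda(x,y)\,dy\to 1$ as $t\to 0^+$, locally uniformly in $x$ near $x_0$ --- and this is exactly the point where the non-Markovian character of $\{P_t^\lambda\}_{t>0}$ bites, as you yourself note at the outset. The paper supplies the missing ingredient: it uses the exact normalization $\int_0^\infty x^{-\lambda}y^\lambda P_t^\lambda(x,y)\,dy=1$ and then estimates
$$
\Bigl|\int_0^\infty P_t^\lambda (x,y)\,dy-1\Bigr|\leq \int_0^\infty \Bigl|1-\Bigl(\frac{y}{x}\Bigr)^\lambda\Bigr|P_t^\lambda (x,y)\,dy,
$$
splitting the right-hand side into $y$ close to and far from $x$ to show it tends to $0$. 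Without this (or an equivalent asymptotic comparison of $P_t^\lambda(x,y)$ with the classical Poisson kernel sharp enough to control the mass, not merely its order of magnitude), the key limit for $g_1$ is asserted rather than proved. Once that identity and estimate are added, your localized scheme does work and is essentially a reorganization of the paper's argument (the paper instead first treats bounded $f$ by splitting off the constant $f(x_0)$, and then reduces the general case to the bounded one by smooth truncations $\phi_n$).
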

\begin{proof}
Differentiating under the integral sign and using \cite[(16.1')]{MS} it is not hard to see that $v$ is $\lambda$-harmonic function on $(0,\infty )\times (0,\infty )$.

Suposse firstly that $f$ is bounded in $(0,\infty )$. Let $x_0\in (0,\infty )$. We write the following decomposition
\begin{align*}
P_t^\lambda (f)(x)-f(x_0)&=\int_0^\infty P_t^\lambda (x,y)[f(y)-f(x_0)]dy+\left(\int_0^\infty P_t^\lambda (x,y)dy-1\right)f(x_0)\\
&=:I_1(x,t)+I_2(x,t),\quad x,t\in (0,\infty ).
\end{align*}
Assume that $\varepsilon >0$. There exists $\delta \in (0,x_0/2)$ such that $|f(y)-f(x_0)|<\varepsilon $ provided that $|y-x_0|<\delta $, because $f$ is continuous in $x_0$. Since $f$ is bounded in $(0,\infty )$ we get
\begin{align*}
|I_1(x,t)|&\leq \left(\int_{|y-x_0|<\delta }+\int_{|y-x_0|\geq\delta }\right)P_t^\lambda (x,y)|f(y)-f(x_0)|dy\\
&\leq \varepsilon \int_{|y-x_0|<\delta }P_t^\lambda (x,y)dy+2\|f\|_{L^\infty (0,\infty )}\int_{|y-x_0|\geq\delta }P_t^\lambda (x,y),\quad x,t\in (0,\infty ).
\end{align*}
By \cite[p. 86, (b)]{MS} we obtain
$$
 \int_{|y-x_0|<\delta }P_t^\lambda (x,y)dy\leq C\int_{-\infty}^{+\infty}\frac{t}{(x-y)^2+t^2}dy\leq C,\quad x,t\in (0,\infty ),
$$
and
\begin{align*}
 \int_{|y-x_0|\geq\delta }P_t^\lambda (x,y)dy&\leq C \int_{|y-x_0|\geq\delta }\frac{t}{(x-y)^2+t^2}dy\leq Ct \int_{|y-x_0|\geq\delta }\frac{dy}{(x-y)^2}\\
&\leq  Ct \int_{|y-x|\geq\delta /2}\frac{dy}{(x-y)^2}\leq C\frac{t}{\delta},\quad |x-x_0|<\frac{\delta}{2}\mbox{ and }t>0.
\end{align*}
Hence,
\begin{equation}\label{3.4}
|I_1(x,t)|\leq C\left(\varepsilon +\frac{t}{\delta}\right),\quad |x-x_0|<\frac{\delta}{2}\mbox{ and }t>0.
\end{equation}

On the other hand, by taking into account that $\int_0^\infty x^{-\lambda }y^\lambda P_t^\lambda (x,y)dy=1$, $x,t\in (0,\infty )$, (see, \cite[p. 29 (4)]{EMOT}, \cite[\S 2 (1), (2)]{Hirs} and \cite[(16.1')]{MS}), we get
$$
\left|\int_0^\infty P_t^\lambda (x,y)dy-1\right|\leq \int_0^\infty \left|1-\Big(\frac{y}{x}\Big)^\lambda \right|P_t^\lambda (x,y)dy.
$$
We choose $\eta\in (0,1)$ such that $|1-z^\lambda |<\varepsilon $ provided that $|1-z|<\eta$. From \cite[p. 86, (b)]{MS} we deduce that
\begin{align*}
\left|\int_0^\infty P_t^\lambda (x,y)dy-1\right|&\leq \left(\int_0^{(1-\eta)x}+\int_{(1-\eta )x}^{(1+\eta )x}+\int_{(1+\eta )x}^\infty \right)\left|1-\Big(\frac{y}{x}\Big)^\lambda \right|P_t^\lambda (x,y)dy\\
&\leq C\left(\int_0^{(1-\eta )x}\frac{(1+(1-\eta)^\lambda )t}{(x-y)^2+t^2}dy+\varepsilon \int_{(1-\eta )x}^{(1+\eta)x}\frac{t}{(x-y)^2+t^2}dy\right.\\
&\hspace{0.1cm}\left. +\int_{(1+\eta)x}^\infty  \left(\Big(\frac{y}{x}\Big)^\lambda- 1\right)\frac{t(xy)^\lambda}{((x-y)^2+t^2)^{\lambda +1}}dy\right)\\
&\leq C\left(\frac{(1+(1-\eta )^\lambda)t}{(\eta x)^2}\int_0^{(1-\eta)x}dy+\varepsilon +t\int_{(1+\eta )x}^\infty \frac{(y^\lambda -x^\lambda)y^\lambda }{(\frac{\eta y}{1+\eta})^{2\lambda +2}}dy\right)\\
&\leq C\left(\frac{t}{\eta ^2x}+\varepsilon +\frac{t}{\eta ^{2\lambda +2}}\int_{(1+\eta)x}^\infty \frac{dy}{y^2}\right)\leq  C\left(\varepsilon +\frac{t}{\eta ^{2\lambda +2}x}\right)\\
&\leq C\left(\varepsilon +\frac{t}{\eta ^{2\lambda +2}}\right) ,\quad |x-x_0|<\frac{x_0}{2}\mbox{ and }t>0.
\end{align*}
Then
\begin{equation}\label{3.5}
|I_2(x,t)|\leq C\left(\varepsilon +\frac{t}{\eta ^{2\lambda +2}}\right),\quad |x-x_0|<\frac{x_0}{2}\mbox{ and }t>0.
\end{equation}
Putting together (\ref{3.4}) and (\ref{3.5}) we conclude that
$$
\lim_{\substack{(x,t)\rightarrow (x_0,0)\\
 x,t\in (0,\infty )}}P_t^\lambda (f)(x)=f(x_0).
$$

We now study the general case, that is, consider $f$ a continuous function such that
$$
\int_0^\infty \frac{y^\lambda |f(y)|}{(1+y^2)^{\lambda +1}}dy<\infty .
$$
Let $x_0\in (0,\infty )$. For every $n\in \mathbb{N}$ we denote by $\phi _n$ a smooth function on $(0,\infty )$ such that $\phi _n(x)=1$, $x\in (1/n,n)$, and $\phi _n(x)=0$, $x\in (0,\infty )\setminus (1/(n+1),n+1)$. 

Suppose that $\varepsilon >0$ and let $n_0\in \mathbb{N}$ such that $x_0\in (1/n_0, n_0)$. We can write
\begin{align}\label{C1}
|P_t^\lambda (f)(x)-f(x_0)|&\leq |P_t^\lambda (f-f\phi _n)(x)|+|P_t^\lambda (f\phi _n)(x)-(f\phi _n)(x_0)|+|(f\phi _n)(x_0)-f(x_0)|\nonumber \\
&=|P_t^\lambda (f-f\phi _n)(x)|+|P_t^\lambda (f\phi _n)(x)-(f\phi _n)(x_0)|,\quad n\geq n_0.
\end{align}

According to \cite[p. 86 (b)]{MS} we have that, for each $|x-x_0|<x_0/2$, $t\in (0,1)$ and $n\in \mathbb{N}, n\geq 4n_0$,
\begin{align*}
|P_t^\lambda (f-f\phi _n)(x)|&\leq Ctx^\lambda \left(\int_0^{1/n}+\int_n^\infty \right)\frac{y^\lambda |f(y)|}{((x-y)^2+t^2)^{\lambda +1}}dy\\
&\leq  Cx_0^\lambda \left(\int_0^{1/n}\frac{y^\lambda |f(y)|}{(x_0^2+t^2)^{\lambda +1}}dy+ \int_n^\infty \frac{y^\lambda |f(y)|}{(y^2+t^2)^{\lambda +1}}dy\right)\\
&\leq C\left(\frac{1}{x_0^{\lambda +2}}\int_0^{1/n}y^\lambda |f(y)|dy+x_0^\lambda \int_n^\infty \frac{y^\lambda |f(y)|}{y^{2\lambda +2}}dy\right)\\
&\leq C\left(\frac{1}{x_0^{\lambda +2}}\int_0^{1/n}\frac{y^\lambda |f(y)|}{(1+y^2)^{\lambda +1}}dy+x_0^\lambda \int_n^\infty \frac{y^\lambda |f(y)|}{(1+y^2)^{\lambda +1}}dy\right),
\end{align*}
with $C$ independent of $x$, $t$ and $n$.

Then, we can find $n_1\in \mathbb{N}$, $n_1\geq 4n_0$, such that
\begin{equation}\label{C2}
|P_t^\lambda (f-f\phi _n)(x)| <\varepsilon , \quad |x-x_0|<\frac{x_0}{2},\;t\in (0,1), \;n\in \mathbb{N},\;n\geq n_1.
\end{equation}

On the other hand, for each $n\in \mathbb{N}$, since $f\phi _n$ is continuous and bounded on $(0,\infty )$, 
\begin{equation}\label{C3}
\lim_{\substack{(x,t)\rightarrow (x_0,0)\\
		      x,t\in (0,\infty )}}P_t^\lambda (f\phi _n)(x)=(f\phi _n)(x_0).
\end{equation}

By considering (\ref{C1}), (\ref{C2}) and (\ref{C3}) we conclude that
$$
\lim_{\substack{(x,t)\rightarrow (x_0,0)\\
			x,t\in (0,\infty )}}P_t^\lambda (f)(x)=f(x_0).
$$
\end{proof}
The space of $\lambda$-harmonic functions on $ (0,\infty )\times\mathbb{R}$ form a Brelot harmonic space. Then, it is well-known that $\lambda$-harmonic functions on $(0,\infty )\times \mathbb{R}$ satisfy the mean value properties with respect to the $\lambda$-harmonic measures. Recently, Eriksson and Orelma (\cite{EO}) have established explicit mean value properties for solutions of Weinstein operators. We recall some results in \cite{EO} specified for our particular case and that will be useful.

We consider on $(0,\infty )\times \mathbb{R}$ the hyperbolic metric $d_h$ defined by
$$
d_h(a,b)={\rm arcosh \;}\sigma (a,b),\quad a,b\in  (0,\infty )\times\mathbb{R},
$$
where
$$
\sigma (a,b)=\frac{(a_1-b_1)^2+(a_2-b_2)^2+2a_1b_1}{2a_1b_1},\quad a=(a_1,b_1), b=(b_1,b_2)\in (0,\infty )\times \mathbb{R}.
$$
The hiperbolic ball $B_h(a,r)$ with center $a\in (0,\infty )\times \mathbb{R}$ and radius $r>0$ is defined as usual by
$$
B_h(a,r)=\{b\in (0,\infty )\times \mathbb{R}: d_h(a,b)<r\}.
$$
For every $a\in (0,\infty )\times \mathbb{R}$ and $r>0$, $B_h(a,r)$ is actually an Euclidean ball. We have that, for each $a=(a_1,a_2)\in (0,\infty )\times \mathbb{R}$ and $r>0$
$$
B_h(a,r)=\{b\in (0,\infty )\times \mathbb{R}: |\tilde{a}-b|<a_1\sinh r\},
$$
where $\tilde{a}=(a_1\cosh r,a_2)$.

In \cite{AL} Akin and Leutwiler introduced the function
$$
\varphi _\alpha (r)=\frac{(1-r^2)^\alpha }{2}\int_{-1}^1\frac{dy}{|r-y|^{2\alpha}},\quad 0<r<1,
$$
in their investigations about Weinstein equations.

From \cite[Theorem 3.3]{EO} it follows the following mean value property for $\lambda$-harmonic functions.
\begin{lema}\label{Lem3.2}
Let $\lambda >0$. Assume that $U$ is an open subset of $(0,\infty )\times \mathbb{R}$. If $v$ is a $\lambda$-harmonic function in $U$ then, for every $a\in U$ and $r>0$ such that $\overline{B_h(a,r)}\subset U$,
\begin{equation}\label{3.6}
v(a)=\frac{1}{2\sinh (r)\varphi _\alpha (\tanh(r/2))}\int_{\partial B_h(a,r)}v(b_1,b_2)\frac{d\tau (b_1,b_2)}{b_1},
\end{equation}
where $\alpha =(1+|2\lambda -1|)/2$ and $\tau$ denotes the length measure on $\partial B_h(a,r)$.
\end{lema}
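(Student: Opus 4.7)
The lemma is presented as a direct consequence of \cite[Theorem 3.3]{EO}; the plan is accordingly to match our setting to theirs and extract the explicit normalization, rather than to develop an independent proof.

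First, I would verify that the $\lambda$-harmonic equation fits into the family of Weinstein-type equations treated by Eriksson and Orelma. The coefficient $\lambda(\lambda-1)$ is invariant under $\lambda\leftrightarrow 1-\lambda$, which explains why the parameter in the mean value formula depends only on $|2\lambda-1|$, producing $\alpha=(1+|2\lambda-1|)/2=\max\{\lambda,1-\lambda\}$. Equivalently, the substitution $v=x^\lambda \tilde v$ yields, by a direct calculation,
$$
\partial_t^2 v+\partial_x^2 v-\frac{\lambda(\lambda-1)}{x^2}v=x^\lambda\Bigl(\partial_t^2\tilde v+\partial_x^2\tilde v+\frac{2\lambda}{x}\partial_x\tilde v\Bigr),
$$
so $v$ is $\lambda$-harmonic if and only if $\tilde v$ satisfies the standard Weinstein equation with parameter $k=2\lambda$. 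In any case, EO's result is written to apply directly to $v$.

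Second, I would recall the geometric facts used in EO's statement, which are already noted just before the lemma: the hyperbolic ball $B_h(a,r)$ is the Euclidean ball centered at $\tilde a=(a_1\cosh r,a_2)$ of Euclidean radius $a_1\sinh r$, and the invariant hyperbolic arc-length element on $\partial B_h(a,r)$ is $d\tau/b_1$. This is precisely what produces the factor $1/b_1$ in the integrand of the stated formula.

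Third, I would apply \cite[Theorem 3.3]{EO} to $v$ on $B_h(a,r)\subset U$. That theorem expresses $v(a)$ as an integral over $\partial B_h(a,r)$ against the $\lambda$-harmonic measure of the boundary; this measure is absolutely continuous with respect to the hyperbolic arc-length $d\tau/b_1$, and its total mass is computed in closed form via the Akin--Leutwiler function $\varphi_\alpha$, producing the explicit normalizing constant $2\sinh(r)\,\varphi_\alpha(\tanh(r/2))$. The principal task is mostly notational: carefully matching EO's parameterization of the Weinstein family and their normalization of the hyperbolic boundary measure with ours, so that both the identification $\alpha=(1+|2\lambda-1|)/2$ and the explicit constant $2\sinh(r)\,\varphi_\alpha(\tanh(r/2))$ emerge correctly. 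Once the parameters are aligned, the lemma is a direct specialization of their theorem.
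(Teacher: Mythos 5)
Your proposal is correct and follows essentially the same route as the paper, which gives no independent proof of this lemma but simply quotes it as a specialization of \cite[Theorem 3.3]{EO}; your extra verifications (the substitution $v=x^{\lambda}\tilde v$ reducing $B_\lambda+\partial_t^2$ to a standard Weinstein operator, the identification $\alpha=(1+|2\lambda-1|)/2=\max\{\lambda,1-\lambda\}$, and the role of $d\tau/b_1$ as hyperbolic arc length) are accurate and consistent with the facts the paper records just before the lemma. Nothing further is needed.
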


We now prove the converse of Lemma \ref{Lem3.2}.
\begin{lema}\label{Lem3.3}
Let $\lambda >0$ and let $U$ be an open subset of $(0,\infty )\times \mathbb{R}$. Suppose that $v$ is a continuous function on $U$ such that the mean value property (\ref{3.6}) holds for every $a\in U$ and $r>0$ such that $\overline{B_h(a,r)}\subset U$. Then, $v$ is $\lambda $-harmonic in $U$.
\end{lema}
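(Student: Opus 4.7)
The plan is to reduce to uniqueness of the Dirichlet problem for the Weinstein operator $\partial_t^2+B_{\lambda,x}$ on hyperbolic balls. Fix $a\in U$ and $r_0>0$ with $\overline{B_h(a,r_0)}\subset U$. Since $\overline{B_h(a,r_0)}$ is a compact subset of the open half plane $(0,\infty)\times\mathbb{R}$, the coefficients of $\partial_t^2+B_{\lambda,x}$ are smooth there, the operator is uniformly elliptic, and $B_h(a,r_0)$ is a regular domain for it. Invoking either the Brelot harmonic space structure recalled just before Lemma \ref{Lem3.2} or classical elliptic PDE theory, I would produce a function $w\in C^2(B_h(a,r_0))\cap C(\overline{B_h(a,r_0)})$ that is $\lambda$-harmonic on $B_h(a,r_0)$ and satisfies $w=v$ on $\partial B_h(a,r_0)$.

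Set $h=v-w$. It is continuous on $\overline{B_h(a,r_0)}$, vanishes on $\partial B_h(a,r_0)$, and satisfies the mean value identity (\ref{3.6}) on every hyperbolic ball whose closure lies in $B_h(a,r_0)$ (the property is inherited from $v$ by hypothesis and from $w$ by Lemma \ref{Lem3.2}). To convert (\ref{3.6}) into a genuine probability-measure mean value property I would exploit that the explicit function $b\mapsto b_1^\lambda$ is $\lambda$-harmonic on $(0,\infty)\times\mathbb{R}$ (a direct calculation gives $B_\lambda b_1^\lambda=0$ and $\partial_t^2 b_1^\lambda=0$). Applying Lemma \ref{Lem3.2} to this function shows that the positive measure $d\nu_{a',r}(b):=(a'_1)^{-\lambda}\,b_1^{\lambda-1}(2\sinh r\,\varphi_\alpha(\tanh(r/2)))^{-1}d\tau(b)$ has total mass one on $\partial B_h(a',r)$, and (\ref{3.6}) for $h$ rewrites as $\widetilde h(a')=\int_{\partial B_h(a',r)}\widetilde h(b)\,d\nu_{a',r}(b)$ where $\widetilde h:=h/x^\lambda\in C(\overline{B_h(a,r_0)})$.

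The standard maximum principle now applies to $\widetilde h$: if it attains its supremum $M$ at some $c\in B_h(a,r_0)$, the probability-measure identity above together with continuity forces $\widetilde h\equiv M$ on $\partial B_h(c,\rho)$ for every sufficiently small $\rho$, and hence on a neighborhood of $c$; connectedness of $B_h(a,r_0)$ then gives $\widetilde h\equiv M$ throughout, and the boundary condition $h|_{\partial B_h(a,r_0)}=0$ forces $M=0$. The symmetric argument for the infimum yields $\widetilde h\equiv 0$, so $v\equiv w$ is $\lambda$-harmonic on $B_h(a,r_0)$. Since $a\in U$ is arbitrary, $v$ is $\lambda$-harmonic on $U$. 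The main obstacle is the first step, solvability of the Dirichlet problem on $B_h(a,r_0)$; this should be essentially routine because $\overline{B_h(a,r_0)}$ stays away from the singular line $\{x=0\}$, so the Weinstein operator behaves as a standard uniformly elliptic operator on a smooth bounded domain and either Brelot's framework or classical Perron--Schauder methods deliver $w$.
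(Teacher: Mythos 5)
Your proof is correct and follows essentially the same route as the paper's: establish a maximum principle from the mean value property, solve the Dirichlet problem for the uniformly elliptic operator $\mathcal{L}_\lambda$ on a hyperbolic ball (whose closure stays away from $\{x=0\}$), and conclude $v=w$ by comparison. The only real difference is cosmetic but pleasant: you obtain a probability measure by applying (\ref{3.6}) to the explicit $\lambda$-harmonic function $x^\lambda$ and running the maximum principle on $h/x^\lambda$, whereas the paper runs it directly on $v$ using the normalization $\int_{\partial B_h(b,R)}\frac{d\tau(z_1,z_2)}{z_1}=2\sinh(R)\varphi_\alpha\bigl(\tanh\frac{R}{2}\bigr)$ together with a ``closest point of the maximum set to the boundary'' argument in place of your open-and-closed connectedness argument.
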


\begin{proof}
In order to show this property we follow a procedure similar to the classical one used to establish the corresponding result for harmonic functions.

In a first step we prove a maximum principle in this context. Let $a\in U$ and $r>0$ such that $\overline{B_h(a,r)}\subset U$. Since $v$ is continuous in $\overline{B_h(a,r)}$, the set
$$
A=\{b\in \overline{B_h(a,r)}: v(b)\geq v(c), \;c\in B_h(a,r)\}\not=\emptyset.
$$
Suppose that $A\cap \partial B_h(a,r)=\emptyset$. Since $A$ is closed, 
$$
d(A,\partial B_h(a,r))=\min \{|c-z|: c\in A, z\in \partial B_h(a,r)\}>0.
$$
We choose $b\in A$ such that 
$$
d(b,\partial B_h(a,r))=\inf \{|b-z|: z\in \partial B_h(a,r)\}=d(A,\partial B_h(a,r))
$$
and $R>0$ such that $B_h(b,R)\subset B_h(a,r)$. We consider the sets
$$
M_+=A\cap \partial B_h(b,R)\quad \mbox{ and }M_-=A^{\rm c}\cap \partial B_h(b,R).
$$
Since $\tau (M_-)>0$ we deduce that
\begin{align*}
\frac{1}{2\sinh (R)\varphi _\alpha (\tanh (R/2))}\int_{\partial B_h(b,R)}v(z_1,z_2)\frac{d\tau (z_1,z_2)}{z_1}&\\
&\hspace{-4cm}=
\frac{1}{2\sinh (R)\varphi _\alpha (\tanh (R/2))}\left(\int_{M_+}+\int_{M_-}\right)v(z_1,z_2)\frac{d\tau (z_1,z_2)}{z_1}<v(b).
\end{align*}
We have taken into account that
$$
\int_{\partial B_h(b,R)}\frac{d\tau (z_1,z_2)}{z_1}=2\sinh(R)\varphi _\alpha \Big(\tanh\frac{R}{2}\Big).
$$
Hence, since $v$ satisfies (\ref{3.6}) for every $a\in U$ and $r>0$ such that $\overline{B_h(a,r)}\subset U$, $A\cap \partial B_h(a,r)\not=\emptyset$. Then,
$$
\max _{b\in \overline{B_h(a,r)}} v(b)=\max_{b\in \partial B_h(a,r)}v(b).
$$
We now observe that the  operator
$$
\mathcal{L}_\lambda =\partial_t^2+\partial _x^2-\frac{\lambda (\lambda -1)}{x^2},
$$
is uniformly elliptic on every bounded domain $\Omega$ such that $\overline{\Omega}\subset (0,\infty )\times \mathbb{R}$. Then, for every $b\in U$ and $R>0$ such that $\overline{B_h(b,R)}\subset U$ and every continuous function $f$ on $\partial B_h(b,R)$, there exists a continuous function $w$ in $\overline{B_h(b,R)}$ such that $w_{|\partial B_h(b,R)}=f$ and $w$ is $\lambda$-harmonic in $B_h(b,R)$. Hence, according to Lemma \ref{Lem3.2}, this function $w$ satisfies the mean value property (\ref{3.6}) for every $a\in B_h(b,r)$ and $ r>0$ such that $\overline{B_h(a,r)}\subset B_h(b,R)$.

Let $b\in U$ and $R>0$ such that $\overline{B_h(b,R)}\subset U$. We define $f=v_{|\partial B_h(b,R)}$ and denote by $w$ the continuous function in $\overline{B_h(b,R)}$ such that $w_{|\partial B_h(b,R)}=f$ and $w$ is $\lambda$-harmonic in $B_h(b,R)$. We consider the function $F=v-w$ in $\overline{B_h(b,R)}$. It is clear that $F_{|\partial B_h(b,R)}=0$ and $F$ satisfies the mean value property (\ref{3.6}) for every $a\in B_h(b,R)$ and $r>0$ such that $\overline{B_h(a,r)}\subset B_h(b,R)$. The maximum (minimum) property allows us to conclude that $v=w$ in $\overline{B_h(b,R)}$. Thus, we prove that $v$ is $\lambda$-harmonic in $U$.
\end{proof}

{\sc Remark} As it can be deduced from the proof of Lemma \ref{Lem3.3} , in order to see that a function $v$ continuous in an open subset $U$ of $(0,\infty )\times \mathbb{R}$ is $\lambda$-harmonic in $U$, it is sufficient to show that, for every $a\in U$, there exists a sequence $(r_n)_{n\in \mathbb{N}}\subset (0,\infty )$ such that, $r_n\longrightarrow 0$, as $n\rightarrow \infty $,  that $\overline{B_h(a,r_n)}\subset U$, $n\in \mathbb{N}$, and
$$
v(a)=\frac{1}{2\sinh (r_n)\varphi _\alpha (\tanh(r_n/2))}\int_{\partial B_h(a,r_n)}v(b_1,b_2)\frac{d\tau (b_1,b_2)}{b_1},
$$
with $\alpha =(1+|2\lambda -1|)/2$.

Now we establish a uniqueness result for $\lambda$-harmonic functions in $(0,\infty)\times (0,\infty )$.
\begin{lema}\label{Lem3.4}
Let $\lambda >1$. Suppose that $v$ is a bounded and continuous function on $(0,\infty )\times [0,\infty )$ such that $v$ is $\lambda$-harmonic in $(0,\infty )\times (0,\infty)$ and $v(x,0)=0$, $x\in (0,\infty )$. Then, $v=0$ in $(0,\infty )\times [0,\infty )$.
\end{lema}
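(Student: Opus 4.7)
My plan is to reduce the lemma to a Liouville-type statement on the full half-plane $(0,\infty)\times\mathbb R$ via an odd reflection in $t$, and then to establish that statement using the spectral structure of $B_\lambda$ (or, alternatively, an interior-estimate bootstrap).

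First I would extend $v$ by odd reflection across $\{t=0\}$: set $\tilde v(x,t)=v(x,t)$ for $t\ge 0$ and $\tilde v(x,t)=-v(x,-t)$ for $t<0$. The continuity of $v$ on $(0,\infty)\times[0,\infty)$ together with $v(x,0)=0$ makes $\tilde v$ continuous on $(0,\infty)\times\mathbb R$, and clearly $\|\tilde v\|_\infty=\|v\|_\infty=:M<\infty$.

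Next I would verify that $\tilde v$ is $\lambda$-harmonic on $(0,\infty)\times\mathbb R$. On the open sets $\{t>0\}$ and $\{t<0\}$ this is immediate, because the equation $\partial_t^2+B_{\lambda,x}=0$ is invariant under $t\mapsto -t$. To handle points $(x_0,0)$ with $x_0>0$, I would invoke the Remark following Lemma \ref{Lem3.3}. For every sufficiently small $r>0$, the hyperbolic ball $B_h((x_0,0),r)$, being a Euclidean ball centered at $(x_0\cosh r,0)$, is symmetric under $t\mapsto -t$; by oddness of $\tilde v$ the boundary integral $\int_{\partial B_h((x_0,0),r)}\tilde v(b_1,b_2)\,d\tau(b_1,b_2)/b_1$ equals $0=\tilde v(x_0,0)$, so the mean-value identity (\ref{3.6}) is satisfied at $(x_0,0)$ for a sequence of radii $r_n\to 0$, and the Remark yields $\lambda$-harmonicity of $\tilde v$ at $(x_0,0)$.

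Step three is to prove that any bounded $\lambda$-harmonic function $\tilde v$ on $(0,\infty)\times\mathbb R$ vanishes identically. I would argue spectrally: taking (distributional) Fourier transform in $t$ turns $\partial_t^2\tilde v+B_{\lambda,x}\tilde v=0$ into $B_\lambda\hat v(\cdot,\xi)=\xi^2\hat v(\cdot,\xi)$. For $\xi\neq 0$ the independent solutions in $x$ are spanned by $\sqrt{x}\,I_{\lambda-1/2}(|\xi|x)$, which grows exponentially at infinity, and $\sqrt{x}\,K_{\lambda-1/2}(|\xi|x)$, which blows up like $x^{1-\lambda}$ at $0^+$ (unbounded for $\lambda>1$); for $\xi=0$ they are $x^\lambda$ and $x^{1-\lambda}$, neither bounded on $(0,\infty)$. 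Uniform boundedness of $\tilde v$ in $x$ therefore rules out every such mode, forcing $\tilde v\equiv 0$. An alternative elementary route is a bootstrap using interior gradient estimates: on balls $B((x_0,t_0),x_0/2)\subset(0,\infty)\times\mathbb R$, the coefficient $\lambda(\lambda-1)/x^2$ is bounded by $4\lambda(\lambda-1)/x_0^2$, so after rescaling the equation is uniformly elliptic with constants depending only on $\lambda$, yielding $|\partial_t\tilde v(x_0,t_0)|\le CM/x_0$; oddness of $\tilde v$ in $t$ with $\tilde v(x,0)=0$ then gives $|\tilde v(x,t)|\le CM|t|/x$, which can be iterated on progressively smaller neighborhoods of $\{t=0\}$ to show $\tilde v\equiv 0$ on $\{|t|<x\}$, and real-analyticity of $\tilde v$ extends the vanishing to the full domain. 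Restricting to $t\ge 0$ gives $v\equiv 0$.

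The hard part will be Step 3: the Fourier approach requires handling the modes distributionally (since boundedness alone does not place $\tilde v(x,\cdot)$ in any nice Fourier class), and the bootstrap alternative requires a careful choice of ball radii so that each iteration actually produces a genuine geometric improvement rather than merely shuffling constants of order one.
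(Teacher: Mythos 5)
Your Steps 1 and 2 (odd reflection in $t$, and the verification of $\lambda$-harmonicity across $\{t=0\}$ via the mean value property of Lemma \ref{Lem3.3} and the symmetry of hyperbolic balls centered on $(0,\infty)\times\{0\}$) coincide with the paper's argument. The gap is in Step 3, which is where all the difficulty of the lemma lives, and neither of your two proposed routes closes it. The Fourier route is only heuristic: a bounded function of $t$ has a Fourier transform that is merely a tempered distribution in $\xi$, and a distributional solution of the parametrized ODE $B_\lambda \hat v(\cdot,\xi)=\xi^2\hat v(\cdot,\xi)$ does not decompose into "modes" $a(\xi)\sqrt{x}I_{\lambda-1/2}(|\xi|x)+b(\xi)\sqrt{x}K_{\lambda-1/2}(|\xi|x)$ to which a pointwise-in-$\xi$ boundedness argument could be applied; as you yourself note, this is precisely the unproved part. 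The bootstrap route fails for a structural reason: the interior estimate gives $|\tilde v(x,t)|\le CM|t|/x$, but setting $N(\theta)=\sup_{|t|\le\theta x}|\tilde v(x,t)|$ the iteration only yields inequalities of the shape $N(\theta)\le \frac{C\theta(1+\theta')}{\theta'-\theta}\,N(\theta')$, which can be chained to give $N(\theta)\le CM\theta^{\alpha}$ for some $\alpha>0$ but never $N(\theta)=0$ for a fixed $\theta>0$; there is no "genuine geometric improvement" available because the equation has a zeroth-order term and no smallness. Consequently the appeal to real-analyticity has nothing to propagate: oddness only forces the even-order $t$-derivatives of $\tilde v$ to vanish on $\{t=0\}$, and nothing in your argument controls $\partial_t\tilde v(x,0)$, so $\tilde v$ need not vanish to infinite order there.

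The missing idea, which is the heart of the paper's proof, is to convert boundedness into positivity and then use a representation theorem. Since $x^\lambda+x^{1-\lambda}\ge 1$ is $\lambda$-harmonic on $(0,\infty)\times\mathbb R$, the function $\tilde w=w+M(x^\lambda+x^{1-\lambda})$ is a nonnegative $\lambda$-harmonic function, and by Leutwiler's theorem (\cite[Theorem 2.2]{Leu}) it has the integral representation $\tilde w(x,t)=x^\lambda\bigl(m+\int_{\mathbb R}((t-s)^2+x^2)^{-\lambda}d\gamma(s)\bigr)$ for some $m\ge 0$ and positive measure $\gamma$. The boundary condition $w(x,0)=0$ for all $x>0$, together with boundedness as $x\to\infty$ and a Fatou-type differentiation argument for the approximate identity $x^{2\lambda-1}((t-s)^2+x^2)^{-\lambda}$ (separating the absolutely continuous and singular parts of $\gamma$), then pins down $m=M$, the density $h=M/A$, and the vanishing of the singular part, whence $w\equiv 0$. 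Without this representation theorem (or an equivalent Fatou/Harnack-type boundary theory for the Weinstein equation), Step 3 as you have sketched it does not constitute a proof.
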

\begin{proof}
We define
$$
w(x,t)=\left\{\begin{array}{ll}
		v(x,t),&x\in (0,\infty), \;t\in [0,\infty)\\
		-v(x,-t),&x\in (0,\infty ),\;t\in (-\infty, 0).\\
		\end{array}
\right.
$$
$w$ is a continuous function in $(0,\infty )\times \mathbb{R}$. Moreover, $w$ is $\lambda$-harmonic in $(0,\infty )\times \mathbb{R}\setminus\{0\}$. According to Lemma \ref{Lem3.3}, in order to see that $w$ is $\lambda$-harmonic in $(0,\infty )\times \mathbb{R}$ it is sufficient to observe that, for every $x\in (0,\infty )$ and $r>0$ such that $\overline{B_h((x,0),r)}\subset (0,\infty )\times \mathbb{R}$,
$$
0=\int_{\partial B_h((x,0), r)}w(b_1,b_2)\frac{d\tau (b_1,b_2)}{b_1}.
$$
Note that this property holds because $w$ is odd in the second variable and every hyperbolic ball centered in the line $(0,\infty)\times \{0\}$ is actually an Euclidean ball with center in the same line.

Since $v$ is bounded in $(0,\infty )\times [0,\infty )$, $w$ is also bounded in $(0,\infty )\times \mathbb{R}$. Then, there  exists $M>0$ such that $|w(x,t)|\leq M$, $x\in (0,\infty )$ and $t\in \mathbb{R}$. The function $g(x,t)=x^\lambda +x^{1-\lambda}$, $x\in (0,\infty )$ and $t\in \mathbb{R}$, is $\lambda$-harmonic in $(0,\infty )\times \mathbb{R}$. We define the function
$$
\tilde{w}(x,t)=w(x,t)+M(x^\lambda +x^{1-\lambda}),\quad x\in (0,\infty )\mbox{ and }t\in \mathbb{R}.
$$
Thus, $\tilde{w}(x,t)\geq 0$, $x\in (0,\infty )$ and $t\in \mathbb{R}$, and $\tilde{w}$ is $\lambda$-harmonic in $(0,\infty )\times \mathbb{R}$. According to \cite[Theorem 2.2]{Leu} there exists a positive $\sigma$-finite measure $\gamma$ on $\mathbb{R}$ and $m\geq 0$ such that
$$
\tilde{w}(x,t)=x^\lambda \left(m+\int_{-\infty}^{+\infty}\frac{d\gamma (s)}{((t-s)^2+x^2)^\lambda}\right),\quad x\in (0,\infty )\mbox{ and }t\in \mathbb{R}.
$$
Then,
$$
w(x,t)=-M(x^\lambda +x^{1-\lambda})+x^\lambda \left(m+\int_{-\infty}^{+\infty}\frac{d\gamma (s)}{((t-s)^2+x^2)^\lambda}\right),\quad x\in (0,\infty )\mbox{ and }t\in \mathbb{R}.
$$
Since $w(x,0)=0$, $x\in (0,\infty )$, we have that
$$
-Mx^{1-2\lambda}+m-M+\int_{-\infty }^{+\infty }\frac{d\gamma (s)}{(s^2+x^2)^\lambda }=0,\quad x\in (0,\infty ).
$$
By letting $x\rightarrow +\infty$ and by dominated convergence theorem we deduce that $m=M$. Hence,
\begin{equation}\label{3.7*}
w(x,t)=x^{1-\lambda }\left(-M+\int_{-\infty }^{+\infty }\frac{x^{2\lambda -1}}{((t-s)^2+x^2)^\lambda }d\gamma (s)\right),\quad x\in (0,\infty )\mbox{ and }t\in \mathbb{R},
\end{equation}
and again, since $w(x,0)=0$, $x\in (0,\infty )$, we deduce that
\begin{equation}\label{3.7}
M=\int_{-\infty }^{+\infty }\frac{x^{2\lambda-1}}{(s^2+x^2)^\lambda }d\gamma (s),\quad x\in (0,\infty ).
\end{equation}
By using Radon-Nikodym theorem we can write $d\gamma (s)=hds+d\mu (s)$, where $0\leq h\in L^1_{\rm loc}(\mathbb{R})$ and $\mu$ is a positive measure that is orthogonal to the Lebesgue measure on $\mathbb{R}$.

It can be seen that
\begin{equation}\label{limite}
\lim_{x\rightarrow 0^+}\int_{-\infty }^{+\infty }\frac{x^{2\lambda -1}}{((t-s)^2+x^2)^\lambda }d\gamma (s)=Ah(t),\quad \mbox{a.e. }t\in \mathbb{R}.
\end{equation}
Here, a.e. is understood with respect to the Lebesgue measure on $\mathbb{R}$ and 
$$
A=\int_{-\infty}^{+\infty }\frac{1}{(s^2+1)^{\lambda }}ds=\frac{\sqrt{\pi}\Gamma (\lambda -1/2)}{\Gamma (\lambda)}.
$$
Indeed, fix $N\in \mathbb{N}$. It is sufficient to see (\ref{limite}) for a.e. $|t|\leq N$. Denote by $K_x$, $x\in (0,\infty )$, the kernel
$$
K_x(t,s)=\frac{x^{2\lambda -1}}{((t-s)^2+x^2)^\lambda },\quad t,s\in \mathbb{R}.
$$
For every $n\in \mathbb{N}$, let us define $h_n(t)=h(t)\chi _{(-n,n)}(t)$, $t\in \mathbb{R}$. Then, since $\int_{-\infty  }^{+\infty }K_x(t,s)ds=A$, $x\in (0,\infty ),t\in \mathbb{R}$, it follows that, for each $n\in \mathbb{N}$, $n\geq N$, we can write 
\begin{align}\label{des}
\int_{-\infty}^{+\infty}K_x(t,s)d\gamma (s)-Ah(t)&=\int_{-\infty}^{+\infty}K_x(t,s)[h(s)-h_n(s)]ds+\int_{-\infty}^{+\infty}K_x(t,s)h_n(s)ds-Ah_n(t)\nonumber\\
&\hspace{0.3cm}+\int_{-\infty}^{+\infty}K_x(t,s)d\mu (s),\quad x\in (0,\infty ), \;|t|\leq N.
\end{align}

When $n\geq 2N$, the first term can be bounded as follows,
\begin{align*}
\left|\int_{-\infty}^{+\infty}K_x(t,s)[h(s)-h_n(s)]ds\right|&\leq \int_{|s|>n}\frac{x^{2\lambda -1}|h(s)|}{((t-s)^2+x^2)^\lambda }ds\leq C\int_{|s|>n}\frac{|h(s)|}{(s^2+x^2)^\lambda }ds\\
&\leq C\int_{|s|>n}\frac{|h(s)|}{s^{2\lambda }}ds\leq C\int_{|s|>n}\frac{|h(s)|}{(s^2+1)^\lambda }ds,\quad x\in (0,1),\;|t|\leq N.
\end{align*}

Thus, for every $\varepsilon >0$, there exists $n_0\in \mathbb{N}$, $n_0\geq 2N$, independent of $x\in (0,1)$ and $|t|\leq N$, such that
\begin{equation}\label{a}
\left|\int_{-\infty}^{+\infty}K_x(t,s)[h(s)-h_{n_0}(s)]ds\right|<\varepsilon, \quad x\in (0,1),\;|t|\leq N.
\end{equation}

On the other hand, we observe that
$$
|K_x(t,s)|\leq C\left\{\begin{array}{ll}
			\displaystyle \frac{1}{x},&|t-s|<x,\\
			&\\
			\displaystyle \frac{1}{2^{(2\lambda -1)k}2^kx},&2^{k-1}x\leq |t-s|<2^kx,
			\end{array}
\right.,\quad x\in (0,\infty ), \;t,s\in \mathbb{R},\;k\in \mathbb{N}.
$$
Then, since $\lambda >1$, it is not difficult to see that
$$
\sup_{x\in (0,\infty )}\left|\int_{-\infty}^{+\infty}K_x(t,s)h_{n_0}(s)ds\right|\leq C\mathcal{M}(|h_{n_0}|)(t),\quad t\in \mathbb{R},
$$
and
$$
\sup_{x\in (0,\infty )}\left|\int_{-\infty}^{+\infty}K_x(t,s)d\mu(s)ds\right|\leq C\mathcal{M}(\mu)(t),\quad t\in \mathbb{R},
$$
where $\mathcal{M}$ represents the classical Hardy-Littlewood maximal function defined on $L^1(\mathbb{R})$ and on the set of the Borel measures on $\mathbb{R}$.

By following standard arguments (see \cite[Theorems 6.39 and 6.42]{ABR}, for ins\-tan\-ce)  we obtain that
\begin{equation}\label{b}
\lim_{x\rightarrow 0^+}\int_{-\infty }^{+\infty }K_x(t,s)h_{n_0}(s)ds=Ah_{n_0}(t),\quad \mbox{a.e. }t\in \mathbb{R},
\end{equation}
and
\begin{equation}\label{c}
\lim_{x\rightarrow 0^+}\int_{-\infty }^{+\infty }K_x(t,s)d\mu(s)=0, \quad \mbox{a.e. }t\in \mathbb{R}.
\end{equation}

Putting together (\ref{des}), (\ref{a}), (\ref{b}) and (\ref{c}) we obtain (\ref{limite}) for a.e. $|t|\leq N$. 

By taking into account that $w$ is a bounded function in $(0,\infty )\times \mathbb{R}$ and $\lambda >1$,  from (\ref{3.7*}) we deduce that
$$
-M+Ah(t)=0,\quad \mbox{a.e. }t\in \mathbb{R},
$$
and by (\ref{3.7}), it follows that
$$
\int_{-\infty }^{+\infty}\frac{d\mu (s)}{(s^2+x^2)^\lambda}=0,\quad x\in (0,\infty ).
$$
Hence, $\mu =0$. By using again (\ref{3.7*}) we obtain
$$
w(x,t)=x^{1-\lambda}\left(-M+\frac{M}{A}\int_{-\infty }^{+\infty}\frac{x^{2\lambda -1}}{((t-s)^2+x^2)^\lambda }ds\right)=0,\quad x\in (0,\infty )\mbox{ and }t\in \mathbb{R}.
$$
Then $v(x,t)=0$, $x\in (0,\infty )$ and $t\geq 0$.
\end{proof}

\begin{proof}[Proof of (\ref{3.3})] Let $t_0>0$. We define the function $v(x,t)=\partial_tu(x,t+t_0)$, $x\in (0,\infty )$ and $t\in [0,\infty )$. We have that $v$ is bounded (see (\ref{9.1})),  continuous in $(0,\infty )\times [0,\infty )$ and $\lambda$-harmonic in $(0,\infty )\times (0,\infty )$. We consider $f(x)=v(x,0)$, $x\in (0,\infty )$, and define
$$
V(x,t)=\left\{\begin{array}{ll}
		P_t^\lambda (f)(x),& x,t\in (0,\infty ),\\
		f(x),&x\in (0,\infty )\mbox{ and }t=0.
		\end{array}
\right.
$$
Since $f$ is bounded and continuous in $(0,\infty )$, by Lemma \ref{Lem3.1}, the function $V$ is continuous and bounded in $(0,\infty )\times [0,\infty)$ and $\lambda $-harmonic in $(0,\infty )\times (0,\infty )$. The function $V-v$ is bounded and continuous in $(0,\infty )\times [0,\infty )$, and $\lambda$-harmonic in $(0,\infty )\times (0,\infty )$. Moreover, $V(x,0)=v(x,0)$, $x\in (0,\infty )$. According to Lemma \ref{Lem3.4}, $V(x,t)=v(x,t)$, $x\in (0,\infty )$ and $t\in [0,\infty )$. Thus, (\ref{3.3}) is established.
\end{proof}

Our next objective is to establish that
\begin{equation}\label{3.8}
u(x,t+r)=P_t^\lambda (u(\cdot , r))(x),\quad x,t,r\in (0,\infty ).
\end{equation}

We have that, for every $r>0$,
\begin{equation}\label{M1}
\int_0^\infty \frac{y^\lambda |u(y,r)|}{(1+y^2)^{\lambda +1}}dy<\infty ,
\end{equation}
and then the integral defining $P_t^\lambda (u(\cdot , r))(x)$ is absolutely convergent, for every $x,t\in (0,\infty )$.

In order to show (\ref{3.8}) we see previously that
\begin{equation}\label{3.9}
\lim_{r \rightarrow \infty}\partial_t\int_0^\infty P_t^\lambda (x,y)u(y,r)dy=0,\quad x,t\in (0,\infty ).
\end{equation}

We note that the arguments that we will use to prove (\ref{3.9}) also allow us to obtain (\ref{M1}).
 
\begin{proof}[Proof of (\ref{3.9})] Since, for every $x,t\in (0,\infty )$, $\int_0^\infty P_t^\lambda (x,y)y^\lambda dy=x^\lambda$  (\cite[p. 84]{MS}) we can write
\begin{align}\label{3.9*}
\partial _t\int_0^\infty P_t^\lambda (x,y)u(y,r)dy&=\partial_t\int_0^\infty P_t^\lambda (x,y)y^\lambda y^{-\lambda}u(y,r)dy\nonumber\\
&=\partial _t\int_0^\infty P_t^\lambda (x,y)y^\lambda [y^{-\lambda} u(y,r )-x^{-\lambda }u(x,r)]dy\nonumber\\
&=\partial _t\int_0^\infty P_t^\lambda (x,y)y^\lambda \int_x^y\partial_z[z^{-\lambda} u(z,r )]dzdy,\quad x,t,r \in (0,\infty ).
\end{align}
Moreover, we have that
$$
\left|\int_x^y\partial_z[z^{-\lambda} u(z,r )]dz\right|\leq \int_x^y|D_{\lambda ,z}u(z,r)|z^{-\lambda}dz\leq C|y^{1-\lambda}-x^{1-\lambda}|\sup_{z\in I_{x,y}}|D_{\lambda ,z}u(z,r)|,\quad x,y,r\in (0,\infty ).
$$
Here, $I_{x,y}=[\min\{x,y\},\max\{x,y\}]$, $x,y\in (0,\infty )$.

Since $u$ is $\lambda$-harmonic in $(0,\infty )\times (0,\infty )$, we get
$$
\left(\partial_t^2-D_{\lambda ,x}D_{\lambda ,x}^*\right)D_{\lambda , x}u(x,t)=D_{\lambda ,x}\left(\partial_t^2-D_{\lambda ,x}^*D_{\lambda ,x}\right)u(x,t)=0,\quad x,t\in (0,\infty ).
$$
Note that
$$
-D_\lambda D_\lambda ^*=x^\lambda Dx^{-2\lambda }Dx^\lambda=u''-\frac{(\lambda +1)\lambda }{x^2}u=B_{\lambda +1}.
$$
Then, $D_{\lambda ,x}u$ is $(\lambda +1)$-harmonic in $(0,\infty )\times (0,\infty )$. Moreover, $x^{-\lambda -1}D_{\lambda ,x}u=\frac{1}{x}\partial _x(x^{-\lambda }u)$ is regular in $\mathbb{R}\times (0,\infty )$ and even in the $x$-variable. By proceeding as in the beginning of Section \ref{sec: (ii)(i)} after Lemma \ref{equiv} we can see that $(D_{\lambda ,x}u)^2$ is subharmonic in $\mathbb{R}\times (0,\infty )$.

Let $x,t\in (0,\infty )$. The subharmonicity of $(D_{\lambda ,x}u)^2$ allows us to write
\begin{align}\label{3.10*}
\sup_{z\in I_{x,y}}|D_{\lambda, z}u(z,r)|&\leq C\sup_{z\in I_{x,y}}\left(\frac{1}{r^2}\int _{B((z,r),r/4)}|D_{\lambda ,a}u(a,b)|^2dadb \right)^{1/2}\nonumber\\
&\leq C\left(\frac{1}{r^2}\int_{\frac{3r }{4}}^{\frac{5r}{4}}\int_{x-\frac{5r}{4}}^{x+\frac{5r}{4}}|D_{\lambda ,a}u(a,b)|^2dadb \right)^{1/2}\nonumber\\
&\leq \frac{C}{r}\|b|D_{\lambda ,a}u(a,b)|^2dadb\|_{\mathcal{C}}^{1/2},\quad |x-y|\leq r, \;y,r\in (0, \infty).
\end{align}

Also, we have that (see \cite[Lemma 3.2]{NoRo})
\begin{equation}\label{3.10}
|y^{1-\lambda }-x^{1-\lambda}|\leq C|x-y|\frac{\min \{x,y\}^{2-\lambda}}{xy},\quad y\in (0,\infty ).
\end{equation}

Then, by using \eqref{3.11*} we obtain
\begin{align}\label{3.11}
\left|\int_{0,|x-y|\leq r}^\infty \partial _tP_t^\lambda (x,y) y^\lambda \int_x^y\partial_z[z^{-\lambda }u(z,r)]dzdy\right| &\nonumber\\
&\hspace{-6cm}\leq C\frac{\|b|D_{\lambda ,a}u(a,b)|^2dadb\|_{\mathcal{C}}^{1/2}}{r}\int_{0,|x-y|\leq r}^\infty \frac{x^{\lambda -1}y^{2\lambda -1}|x-y|\min \{x,y\}^{2-\lambda}}{((x-y)^2+t^2)^{\lambda +1}}dy\nonumber\\
&\hspace{-6cm}\leq \frac{C}{r}\left[\int_{\max\{0,x-r\}}^x\frac{x^{\lambda -1}y^{\lambda +1}|x-y|}{((x-y)^2+t^2)^{\lambda +1}}dy+\int_x^{x+r }\frac{xy^{2\lambda -1}|x-y|}{((x-y)^2+t^2)^{\lambda +1}}dy\right]\nonumber\\
&\hspace{-6cm}\leq \frac{C}{r}\left[\frac{x^{\lambda -1}}{t^{2\lambda +1}}\int_0^x y^{\lambda +1}dy+\frac{x}{t^{2\lambda +1}}\int_x^{2x}y^{2\lambda -1}dy+x\int_{2x}^\infty\frac{y^{2\lambda}}{(y+t)^{2\lambda +2}}dy\right]\nonumber\\
&\hspace{-6cm}\leq \frac{C}{r}\left[\Big(\frac{x}{t}\Big)^{2\lambda +1}+\frac{x}{x+t}\right]\leq \frac{C}{r},\quad r\in (0,\infty ),
\end{align}
being $C$ depending on $x$ and $t$ but not on $r$.

We now make the following decomposition
\begin{align*}
y^{-\lambda}u(y,r)-x^{-\lambda }u(x,r)&=y^{-\lambda }[u(y,r )-u(y,|x-y|)]+y^{-\lambda} u(y,|x-y|)\\
&\hspace{0.5cm}-x^{-\lambda} u(x,|x-y|)+x^{-\lambda} [u(x,|x-y|)-u(x,r)]\\
&=-y^{-\lambda}\int_r^{|x-y|} \partial _su(y,s)ds+x^{-\lambda} \int_r ^{|x-y|}\partial _su(x,s)ds\\
&\hspace{0.5cm}+\int_x^yz^{-\lambda }D_{\lambda, z}u(z,|x-y|)dz,\quad y,r\in (0,\infty ).
\end{align*}
We get
\begin{align*}
|y^{-\lambda}u(y,r)-x^{-\lambda }u(x,r)|&\leq C\left[y^{-\lambda }\int_r ^{|x-y|}|\partial _su(y,s)|ds+x^{-\lambda} \int_r ^{|x-y|}|\partial _su(x,s)|ds\right.\\
&\hspace{0.5cm}\left.+|y^{1-\lambda}-x^{1-\lambda}|\sup_{z\in I_{x,y}}|D_{\lambda, z}u(z,|x-y|)|\right],\quad |x-y|>r, \;y,r\in (0,\infty ).
\end{align*}

From (\ref{3.10}), as in (\ref{3.10*}), it follows that
\begin{align*}
|y^{1-\lambda}-x^{1-\lambda}|\sup_{z\in I_{x,y}}|D_{\lambda, z}u(z,|x-y|)|&\leq C|x-y|\frac{\min \{x,y\}^{2-\lambda}}{xy}\sup_{z\in I_{x,y}}|D_{\lambda, z}u(z,|x-y|)|\\
& \leq C\frac{\min \{x,y\}^{2-\lambda}}{xy}\|b|D_{\lambda ,a}u(a,b)|^2dadb\|_\mathcal{C}^{1/2},\quad y\in (0,\infty ).
\end{align*}
By (\ref{3.2}) we get
\begin{align*}
y^{-\lambda}\int_r^{|x-y|}|\partial _su(y,s)|ds+ x^{-\lambda}\int_r^{|x-y|}|\partial _su(x,s)|ds&\leq C(x^{-\lambda}+y^{-\lambda})\|b|\partial _bu(a,b)|^2dadb\|_\mathcal{C}^{1/2}\int_r ^{|x-y|}\frac{ds}{s}\\
&\hspace{-4cm}\leq C\min\{x,y\}^{-\lambda}\log\frac{|x-y|}{r}\|b|\partial _bu(a,b)|^2dadb\|_\mathcal{C}^{1/2},\quad |x-y|>r,\;y,r\in (0,\infty ).
\end{align*}

Then, by (\ref{3.11*}),
\begin{align*}
\left|\int_{0,|x-y|>r}^\infty \partial_tP_t^\lambda (x,y)y^\lambda [y^{-\lambda}u(y,r)-x^{-\lambda }u(x,r)]dy\right|&\\
&\hspace{-7cm} \leq Cx^\lambda \int_{0,|x-y|>r}^\infty \frac{y^{2\lambda}}{((x-y)^2+t^2)^{\lambda +1}} \left(\min\{x,y\}^{-\lambda }\log\frac{|x-y|}{r}+
\frac{\min \{x,y\}^{2-\lambda}}{xy}\right)dy,\quad r\in (0,\infty ).
\end{align*}
We analyze each term separately. We have that, for every $r>x$, 
\begin{align*}
x^\lambda \int_{0,|x-y|>r}^\infty \frac{y^{2\lambda}\min \{x,y\}^{2-\lambda}}{((x-y)^2+t^2)^{\lambda +1}}\frac{dy}{xy}&\leq \frac{x}{r}\int_{x+r}^\infty \frac{y^{2\lambda-1}}{(y-x+t)^{2\lambda +1}}dy\leq C\frac{x}{r}\int_{2x}^\infty \frac{dy}{(y-x)^2}\leq \frac{C}{r},
\end{align*}
and
\begin{align*}
 x^\lambda \int_{0,|x-y|>r}^\infty \frac{y^{2\lambda}\min \{x,y\}^{-\lambda}}{((x-y)^2+t^2)^{\lambda +1}}\log\frac{|x-y|}{r}dy&\leq \int_{x+r}^\infty \frac{y^{2\lambda}}{(y-x+t)^{2\lambda +2}}\log\frac{y-x}{r}dy\\
&\hspace{-4cm}\leq C\int_{x+r}^\infty \frac{1}{(y-x)^2}\log\frac{y-x}{r}dy\leq \frac{C}{r}\int_1^\infty \frac{\log u}{u^2}du\leq \frac{C}{r}.
 \end{align*}
Here the constant $C$ can depend on $x$ and $t$, but not on $r$. 

We conclude that 
\begin{equation}\label{3.12}
\left|\int_{0,|x-y|>r}^\infty \partial _tP_t^\lambda (x,y)y^\lambda [y^{-\lambda}u(y,r)-x^{-\lambda }u(x,r)]dy\right|\leq\frac{C}{r},\quad r>x.
\end{equation}
By combining (\ref{3.9*}), (\ref{3.11}) and (\ref{3.12}) we deduce that (\ref{3.9}) holds.
\end{proof}

\begin{proof}[Proof of (\ref{3.8})] According to (\ref{3.3}) we have that
$$
\partial _tu(x,t+r)=\partial _ru(x,t+r)=P_t^\lambda \Big[\partial_r u(\cdot ,r )\Big](x),\quad x,t,r \in (0,\infty ).
$$
Since the differentiation under the integral sign is justified by the properties of the function $u$, we obtain
$$
\partial _r[u(x,t+r)-P_t^\lambda (u(\cdot ,r ))(x)]=0,\quad x,t,r \in (0,\infty ),
$$
and
$$
\partial_r\left[\partial_tu(x,t+r)-\partial_tP_t^\lambda (u(\cdot ,r ))(x)\right]=0,\quad x,t,r \in (0,\infty ).
$$
From (\ref{3.2}) and (\ref{3.9}) it follows that
$$
\lim_{r\rightarrow \infty }\left[\partial _tu(x,t+r)-\partial _tP_t^\lambda (u(\cdot ,r ))(x)\right]=0, \quad x,t\in (0,\infty ).
$$
Then,
$$
\partial_t[u(x,t+r)-P_t^\lambda (u(\cdot ,r ))(x)]=0,\quad x,t, r\in (0,\infty ).
$$
Also, (\ref{M1}) and Lemma \ref{Lem3.1} lead to
$$
\lim_{t\rightarrow 0^+}(u(x,t+r)-P_t^\lambda (u(\cdot ,r ))(x))=0,\quad x,r \in (0,\infty).
$$
We conclude that
$$
u(x,t+r)=P_t^\lambda (u(\cdot ,r ))(x),\quad x,t,r \in (0,\infty ),
$$
and (\ref{3.8}) is proved.
\end{proof}

For every $k\in \mathbb{N}$, we define
$$
u_k(x,t)=u\Big(x,t+\frac{1}{k}\Big),\quad x\in (0,\infty ),\;t\in [0,\infty ).
$$
We now establish that there exists $C>0$ such that
\begin{equation}\label{3.13}
\sup_I\frac{1}{|I|}\int_0^{|I|}\int_It|\partial _tu_k(x,t)|^2dxdt\leq C\|t|\partial _tu(x,t)|^2dxdt\|_\mathcal{C},
\end{equation}
where the supremum is taken over all bounded intervals $I\subset (0,\infty )$.

\begin{proof}[Proof of (\ref{3.13})] Let $k\in \mathbb{N}$ and let $I$ be a bounded interval in $(0,\infty )$. Suppose that $|I|\geq 1/k$. We obtain
\begin{align}\label{3.14}
\frac{1}{|I|}\int_0^{|I|}\int_It|\partial _tu_k(x,t)|^2dxdt&\leq \frac{1}{|I|}\int_0^{|I|}\int_I\Big(t+\frac{1}{k}\Big)\left|(\partial _tu)\Big(x, t+\frac{1}{k}\Big)\right|^2dxdt\nonumber\\
&\leq \frac{1}{|I|}\int_0^{2|I|}\int_{\widehat{I}}s|\partial _su(x,s)|^2dxds\leq 2\|s|\partial _su(x,s)|^2dxds\|_\mathcal{C},
\end{align}
where $\widehat{I}=(a,2b-a)$ when $I=(a,b)$ with $0\leq a<b<\infty$.

Assume now that $|I|<1/k$. According to (\ref{3.2}) we deduce that
$$
\left|\partial_t u\Big(x,t+\frac{1}{k}\Big)\right|\leq \frac{C}{t+1/k}\|s|\partial _su(x,s)|^2dxds\|_\mathcal{C}^{1/2},\quad x,t\in (0,\infty ).
$$
Then
\begin{align}\label{3.15}
\frac{1}{|I|}\int_0^{|I|}\int_It\left|\partial _tu\Big(x,t+\frac{1}{k}\Big)\right|^2dxdt&\leq \frac{C}{|I|}\|s|\partial _su(x,s)|^2dxds\|_\mathcal{C}\int_0^{|I|}\int_I\frac{t}{(t+1/k)^2}dxdt\nonumber\\
&\leq C\|s|\partial _su(x,s)|^2dxds\|_\mathcal{C}k^2\int_0^{|I|}tdt\leq C\|s|\partial _su(x,s)|^2dxds\|_\mathcal{C}.
\end{align}
Putting together (\ref{3.14}) and (\ref{3.15})  we prove (\ref{3.13}).
\end{proof}

We define, for every $k\in \mathbb{N}$, $f_k(x)=u_k(x,0)$, $x\in (0,\infty )$. By (\ref{3.8}), (\ref{M1}), (\ref{3.13}) and Lemma \ref{equiv} we obtain that, for every $k\in \mathbb{N}$, $f_k\in BMO_{\rm o}(\mathbb{R})$ and
\begin{equation}\label{3.16}
\|f_k\|_{BMO_{\rm o}(\mathbb{R})}\leq C\|s|\partial _su(x,s)|^2dxds\|_\mathcal{C}^{1/2}.
\end{equation}
Hardy spaces associated with Bessel operators have been studied in \cite{BDT} and \cite{Dz}. A function $f\in L^1(0,\infty )$ is in the Hardy space $H_{\rm o}^1(\mathbb{R})$ provided that
$$
\sup_{t>0}|P_t^\nu (f)|\in L^1(0,\infty ),
$$
for some (equivalently, for every) $\nu >1$. For every $\nu >1$, we define
$$
\|f\|_{H_\nu ^1}:=\|\sup_{t>0}|P_t^\nu (f)|\|_{L^1(0,\infty )},\quad f\in H^1_{\rm o}(\mathbb{R}).
$$
For each $\nu ,\mu >1$, the norms $\|\cdot\|_{H_\nu ^1}$ and $\|\cdot\|_{H_\mu ^1}$ are equivalent on $H_{\rm o}^1(\mathbb{R})$. The space $H_{\rm o}^1(\mathbb{R})$ endowed with the norm $\|\cdot\|_{H_\nu ^1}$ ($\nu >1$) is a Banach space. The dual space of $H_{\rm o}^1(\mathbb{R})$ is $BMO_{\rm o}(\mathbb{R})$ (\cite[Theorem 1]{DF}).

To finish the proof the following results will be useful.

\begin{lema}\label{Lem3.5}
Let $\nu >0$. For every $x,t\in (0,\infty )$, $P_t^\nu (x,\cdot)\in H_{\rm o}^1(\mathbb{R})$.
\end{lema}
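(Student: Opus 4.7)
The plan is to verify directly the two defining conditions of $H_{\rm o}^1(\mathbb{R})$ applied to $f(\cdot)=P_t^\nu(x,\cdot)$: that $f\in L^1(0,\infty)$, and that $\sup_{s>0}|P_s^\mu(f)|\in L^1(0,\infty)$ for some admissible $\mu>1$. Throughout, $x$ and $t$ are fixed positive numbers and all constants may depend on them.

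First I would establish $P_t^\nu(x,\cdot)\in L^1(0,\infty)$ by splitting the $y$-integration into a near-diagonal region $y\in(x/2,2x)$ and the far regions $y\in(0,x/2)\cup(2x,\infty)$. On the near region the bound $P_t^\nu(x,y)\le Ct/((x-y)^2+t^2)$ established in Lemma~\ref{Lem2.2} produces integrability exactly as for the classical Poisson kernel. On the far regions, where $|x-y|$ is comparable to $x+y$, the sharper estimate $P_t^\nu(x,y)\le C(xy)^\nu t/((x-y)^2+t^2)^{\nu+1}$ supplies the polynomial decay needed at both $y\to 0^+$ and $y\to+\infty$.

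For the maximal condition the key idea is the semigroup identity $P_s^\nu\circ P_t^\nu=P_{s+t}^\nu$, a consequence of the Hankel-transform representation $P_t^\nu F=h_\nu(e^{-yt}h_\nu F)$ used in the proof of Lemma~\ref{Lem2.1} together with the symmetry of the kernel. Taking the order $\nu$ itself to define the Hardy-space norm (legitimate when $\nu>1$; in the complementary range see below) reduces the problem to proving that
\[
z\longmapsto\sup_{u>t}P_u^\nu(x,z)\in L^1(0,\infty).
\]
A direct calculus computation shows that for each $z$ the function $u\mapsto u/((x-z)^2+u^2)^{\nu+1}$ attains its maximum at $u_\ast=|x-z|/\sqrt{2\nu+1}$. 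Combining this with the kernel bound $P_u^\nu(x,z)\le C(xz)^\nu u/((x-z)^2+u^2)^{\nu+1}$ and the constraint $u>t$ yields an estimate of the order $C(xz)^\nu/(|x-z|+t)^{2\nu+1}$.

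Integration in $z$ is then handled by splitting at $|z-x|\sim t$: on the bounded region $\{|z-x|\le t\sqrt{2\nu+1}\}$ the uniform bound gives a finite contribution since the region has length of order $t$ and $z$ is bounded there, while on its complement the decay $|x-z|^{-(2\nu+1)}$ overwhelms the growth of $z^\nu$ at infinity and the singularity at $z=x$ is excluded. The step I expect to be most delicate is the regime $0<\nu\le 1$, in which $\nu$ itself is not admissible in the definition of $H_{\rm o}^1(\mathbb{R})$ and the semigroup shortcut with $\mu=\nu$ is unavailable. In that case I would fix some $\mu>1$ and estimate $\sup_{s>0}|P_s^\mu(P_t^\nu(x,\cdot))(z)|$ by a direct pointwise analysis of the composed kernel $\int P_s^\mu(z,y)P_t^\nu(x,y)\,dy$, arguing in the spirit of Schur and exploiting that both kernels satisfy the same type of pointwise upper bounds already used above.
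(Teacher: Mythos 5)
Your proposal is correct and follows essentially the same route as the paper: the semigroup identity $P_s^\nu[P_t^\nu(x,\cdot)]=P_{s+t}^\nu(x,\cdot)$ combined with the kernel bound $P_u^\nu(x,z)\le C(xz)^\nu u/((x-z)^2+u^2)^{\nu+1}$ yields $\sup_{s>0}|P_s^\nu[P_t^\nu(x,\cdot)](z)|\le C(xz)^\nu/(t+|x-z|)^{2\nu+1}$, which is then integrated in $z$ by a near/far splitting. Your extra attention to the range $0<\nu\le 1$ (where the defining maximal function must use an order $\mu>1$, so the same-order semigroup shortcut does not literally apply) is a legitimate point that the paper's own proof passes over silently; it is harmless in context since the lemma is only invoked with $\nu=\lambda>1$.
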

\begin{proof}
Let $x,t\in (0,\infty )$. From the semigroup property it follows that
$$
P_s^\nu [P_t^\nu (x,\cdot )](z)=\int_0^\infty P_s^\nu (z,y)P_t^\nu (x,y)dy=P_{t+s}^\nu (x,z),\quad z,s\in (0,\infty ).
$$
According to \cite[p. 86, (b)]{MS} we have that
\begin{align*}
|P_s^\nu [P_t^\nu (x,\cdot )](z)|&\leq C\frac{(t+s)(xz)^\nu }{((t+s)^2+ (x-z)^2)^{\nu +1}}\leq C\frac{(xz)^\nu }{(t+s+|x-z|)^{2\nu +1}}\\
&\leq C\frac{(xz)^\nu }{(t+|x-z|)^{2\nu +1}}, \quad  z,s\in (0,\infty ).
\end{align*}
Then,
\begin{align*}
\int_0^\infty \sup_{s>0}|P_s^\nu [P_t^\nu (x,\cdot )](z)|dz&\leq C\left(\int_0^{2x}+\int_{2x}^\infty \right)\frac{(xz)^\nu }{(t+|x-z|)^{2\nu +1}}dz\\
&\hspace{-4cm}\leq C\left(\int_0^{2x}\frac{(xz)^\nu }{t^{2\nu +1}}dz+\int_{2x}^\infty \frac{(xz)^\nu }{(t+z)^{2\nu +1}}dz\right)\leq C\left(\Big(\frac{x}{t}\Big)^{2\nu +1}+\Big(\frac{x}{t}\Big)^\nu \right).
\end{align*}
Thus, we prove that $P_t^\nu (x,\cdot )\in H_{\rm o}^1(\mathbb{R})$.
\end{proof}

\begin{lema}\label{Lem3.6}
Assume that $g\in BMO_{\rm o}(\mathbb{R})$ and $G\in H_{\rm o}^1(\mathbb{R})$ satisfying that $gG\in L^1(0,\infty )$. Then,
\begin{equation}\label{gG}
\langle g,G\rangle_{BMO_{\rm o}(\mathbb{R}), H_{\rm o}^1(\mathbb{R})}=\int_0^\infty g(x)G(x)dx
\end{equation}
\end{lema}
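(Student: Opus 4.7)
The plan is to approximate $g$ by bounded $BMO_{\rm o}(\R)$ functions, for which the duality pairing coincides with the integral by elementary reasons, and then pass to the limit using the hypothesis $gG\in L^1(0,\infty)$. For every $n\in\N$ define the truncation $g_n(x):=\mathrm{sgn}(g(x))\min\{|g(x)|,n\}$, $x\in (0,\infty)$. Then $g_n$ is odd, $|g_n|\le |g|$, $g_n\to g$ pointwise, and a standard truncation argument gives $\|g_n\|_{BMO_{\rm o}(\R)}\le C\|g\|_{BMO_{\rm o}(\R)}$ with $C$ independent of $n$. Since $|g_nG|\le |gG|\in L^1(0,\infty)$, dominated convergence yields
$$\int_0^\infty g_n(x)G(x)\,dx\longrightarrow\int_0^\infty g(x)G(x)\,dx,\quad n\to\infty.$$

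Next I show two facts. First, for every $n$,
$$\langle g_n,G\rangle_{BMO_{\rm o}(\R), H_{\rm o}^1(\R)}=\int_0^\infty g_n(x)G(x)\,dx.$$
To prove this, take an atomic decomposition $G=\sum_j\lambda_ja_j$ with $\sum_j|\lambda_j|\le C\|G\|_{H_{\rm o}^1}$, convergent both in $H_{\rm o}^1(\R)$ and (since $H_{\rm o}^1(\R)\hookrightarrow L^1(0,\infty)$) in $L^1(0,\infty)$. The partial sums $G_N=\sum_{j=1}^N\lambda_ja_j$ are bounded with bounded support in $(0,\infty)$, so by bilinearity and the intrinsic agreement of the pairing with the integral on atoms, $\langle g_n,G_N\rangle=\sum_{j=1}^N\lambda_j\int g_na_j=\int g_nG_N$. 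Letting $N\to\infty$, the left side tends to $\langle g_n,G\rangle$ by continuity of the functional $\ell_{g_n}\in (H_{\rm o}^1)^*$, and the right side tends to $\int g_n G$ because $g_n\in L^\infty$ and $G_N\to G$ in $L^1$.

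Second, $\langle g_n,G\rangle\to\langle g,G\rangle$ as $n\to\infty$. With the same atomic decomposition,
$$\langle g_n-g,G\rangle_{BMO_{\rm o}(\R), H_{\rm o}^1(\R)}=\sum_j\lambda_j\int_0^\infty(g_n(x)-g(x))a_j(x)\,dx.$$
For each $j$, the atom $a_j$ is bounded with bounded support in $(0,\infty)$ and $g\in L^1_{\rm loc}[0,\infty)$, so $\int(g_n-g)a_j\to 0$ by dominated convergence. Using the (b)-type mean-zero cancellation together with (\ref{1.3}), and the endpoint case (a) with (\ref{1.4}), each term is bounded by $C|\lambda_j|\|g\|_{BMO_{\rm o}(\R)}$ uniformly in $n$; since $\sum_j|\lambda_j|<\infty$, dominated convergence on the series gives the claim. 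Combining the two facts yields (\ref{gG}).

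The main subtle point is the identification $\langle g_n,a_j\rangle_{BMO_{\rm o}(\R), H_{\rm o}^1(\R)}=\int g_na_j$ for the (a)-type atoms $a=\delta^{-1}\chi_{(0,\delta)}$, which lack the usual mean-zero cancellation; here the $BMO_{\rm o}$ condition (\ref{1.4}) plays the role of cancellation, both in estimating the pairing uniformly and in establishing that the duality functional acts by integration against such atoms.
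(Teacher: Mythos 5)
Your proof is correct, but it takes a genuinely different route from the paper's. The paper fixes $g$ and approximates $G$: by the atomic characterization of $H_{\rm o}^1(\mathbb{R})$ (\cite[Theorem 1.10]{BDT}) it chooses finite atomic combinations $G_j\to G$ in $H_{\rm o}^1(\mathbb{R})$, for which $\langle g,G_j\rangle=\int_0^\infty g\,G_j$, and then passes to the limit in the integrals in one stroke by the inequality $\bigl|\int_0^\infty g\,(G-G_j)\bigr|\leq \|g\|_{BMO_{\rm o}(\mathbb{R})}\|G-G_j\|_{H_{\rm o}^1(\mathbb{R})}$, which holds because $g(G-G_j)\in L^1(0,\infty )$ (\cite[p. 25]{BCR}). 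You instead fix $G$ and truncate $g$: for the bounded functions $g_n$ the identity $\langle g_n,G\rangle=\int g_nG$ follows from $H^1$- and $L^1$-convergence of the partial atomic sums, and the limit $n\to\infty$ is handled by dominated convergence on the integral side (using $gG\in L^1(0,\infty )$) and, on the pairing side, by the uniform atom bound $\bigl|\int (g_n-g)a_j\bigr|\leq C\|g\|_{BMO_{\rm o}(\mathbb{R})}$ together with termwise convergence and dominated convergence for the series. What each approach buys: yours avoids invoking the quantitative estimate from \cite{BCR} (which is essentially a quantitative form of the lemma being proved), so it is more self-contained, at the cost of the truncation stability $\|g_n\|_{BMO_{\rm o}(\mathbb{R})}\leq C\|g\|_{BMO_{\rm o}(\mathbb{R})}$ (standard, via composition with the $1$-Lipschitz truncation for \eqref{1.3} and directly from \eqref{1.4}) and a two-step limit; the paper's proof is shorter once that external inequality is granted. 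Both arguments ultimately rest on the same foundational facts, namely the atomic decomposition of $H_{\rm o}^1(\mathbb{R})$ and the fact that the canonical duality pairing acts by integration on atoms, including the $(a)$-type atoms where, as you correctly point out, condition \eqref{1.4} plays the role of cancellation.
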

\begin{proof}
According to the atomic characterization of $H_{\rm o}^1(\mathbb{R})$ (\cite[Theorem 1.10]{BDT}) we can find a sequence of measurable functions of compact support $(G_j)_{j\in \mathbb{N}}$ such that, for every $j\in \mathbb{N}$, $G_j$ is a linear combination of $H_{\rm o}^1(\mathbb{R})$-atoms, and $G_j\longrightarrow G$, as $j\rightarrow \infty$, in $H_{\rm o}^1(\mathbb{R})$. Then,
$$
\langle g,G\rangle_{BMO_{\rm o}(\mathbb{R}), H_{\rm o}^1(\mathbb{R})}=\lim_{j\rightarrow \infty }\langle g,G_j\rangle_{BMO_{\rm o}(\mathbb{R}), H_{\rm o}^1(\mathbb{R})}=\lim_{j\rightarrow \infty}\int_0^\infty g(x)G_j(x)dx.
$$

On the other hand, since $gG\in L^1(0,\infty )$ and $gG_j\in L^1(0,\infty )$, $j\in \mathbb{N}$, by \cite[p. 25]{BCR}, we have that
$$
\left|\int_0^\infty g(x)(G(x)-G_j(x))dx\right|\leq \|g\|_{BMO_{\rm o}(\mathbb{R})}\|G-G_j\|_{H_{\rm o}^1(\mathbb{R})},\quad j\in \mathbb{N}.
$$
By letting $j\rightarrow \infty$, we conclude (\ref{gG}).
\end{proof}

By using Banach-Alaoglu theorem and by taking into account (\ref{3.16}) there exists $f\in BMO_{\rm o}(\mathbb{R})$ and a strictly increasing $\phi: \mathbb{N}\longrightarrow \mathbb{N}$ such that $f_{\phi (k)}\longrightarrow f$, as $k\rightarrow \infty$, in the weak star topology of $BMO_{\rm o}(\mathbb{R})$, that is, for every $g\in H_{\rm o}^1(\mathbb{R})$,
\begin{equation}\label{3.17}
\langle f_{\phi (k)},g\rangle_{BMO_{\rm o}(\mathbb{R}), H_{\rm o}^1(\mathbb{R})}\longrightarrow \langle f,g\rangle _{BMO_{\rm o}(\mathbb{R}), H_{\rm o}^1(\mathbb{R})},\quad \mbox{ as }k\rightarrow \infty .
\end{equation}
Moreover,
$$
\|f\|_{BMO_{\rm o}(\mathbb{R})}\leq C\|s|\partial _su(x,s)|^2dxds\|_\mathcal{C}^{1/2}.
$$

By using (\ref{3.17}) and Lemma \ref{Lem3.5} we obtain, for  every $x,t\in (0,\infty )$,
$$
\langle f_{\phi (k)},P_t^\lambda (x,\cdot)\rangle_{BMO_{\rm o}(\mathbb{R}), H_{\rm o}^1(\mathbb{R})}\longrightarrow \langle f,P_t^\lambda  (x,\cdot )\rangle _{BMO_{\rm o}(\mathbb{R}), H_{\rm o}^1(\mathbb{R})},\quad \mbox{ as }k\rightarrow \infty .
$$

Since $BMO_{\rm o}(\mathbb{R})\subset BMO(\mathbb{R})$, by \cite[p. 86, (b)]{MS} and \cite[p. 141]{Ste2}, for every $g\in BMO_{\rm o}(\mathbb{R})$, we get
\begin{align*}
\int_0^\infty |g(y)||P_t^\lambda (x,y)|dy&\leq Ct\int_0^\infty \frac{|g(y)|}{1+y^2}dy\sup_{y\in (0,\infty )}\frac{1+y^2}{t^2+(x-y)^2}\\
&\leq Ct\left(\frac{1}{t^2}+\sup_{y\in (0,\infty )}\frac{y^2}{t^2+(x-y)^2}\right)\leq Ct\left(\frac{1+x^2}{t^2}+1\right),\quad x,t\in (0,\infty ).
\end{align*}
For every $x,t\in (0,\infty )$, Lemma \ref{Lem3.6} leads to
$$
\int_0^\infty f_{\phi (k)}(y)P_t^\lambda (x,y)dy\longrightarrow \int_0^\infty f(y)P_t^\lambda (x,y)dy,\quad \mbox{ as }k\rightarrow \infty .
$$
By (\ref{3.8}) we conclude that
$$
u(x,t)=\int_0^\infty f(y)P_t^\lambda (x,y)dy, \quad x,t\in (0,\infty ).
$$
Thus the proof is finished.

\def\cprime{$'$}


\begin{thebibliography}{10}

\bibitem{AL}
{\sc {\"O}.~Ak{\i}n and H.~Leutwiler}, {\em On the invariance of the solutions
  of the {W}einstein equation under {M}\"obius transformations}, in Classical
  and modern potential theory and applications ({C}hateau de {B}onas, 1993),
  vol.~430 of NATO Adv. Sci. Inst. Ser. C Math. Phys. Sci., Kluwer Acad. Publ.,
  Dordrecht, 1994, pp.~19--29.

\bibitem{ABR}
{\sc S.~Axler, P.~Bourdon, and W.~Ramey}, {\em Harmonic function theory},
  vol.~137 of Graduate Texts in Mathematics, Springer-Verlag, New York,
  second~ed., 2001.

\bibitem{BBFMT}
{\sc J.~J. Betancor, D.~Buraczewski, J.~C. Fari{\~n}a, T.~Mart{\'{\i}}nez, and
  J.~L. Torrea}, {\em Riesz transforms related to {B}essel operators}, Proc.
  Roy. Soc. Edinburgh Sect. A, 137 (2007), pp.~701--725.

\bibitem{BCC2}
{\sc J.~J. Betancor, A.~J. Castro, and J.~Curbelo}, {\em Harmonic analysis
  operators associated with multidimensional {B}essel operators}, Proc. Roy.
  Soc. Edinburgh Sect. A, 142 (2012), pp.~945--974.

\bibitem{BCR}
{\sc J.~J. Betancor, A.~J. Castro, and L.~Rodr\'{\i}guez-Mesa}, 
{\em {UMD}-valued square functions associated with {B}essel operators in {H}ardy and {BMO} spaces}, 
arXiv:1303.5571v1,  2013.

\bibitem{BCFR1}
{\sc J.~J. Betancor, A.~Chicco~Ruiz, J.~C. Fari{\~n}a, and
  L.~Rodr{\'{\i}}guez-Mesa}, {\em Odd {${\rm BMO}(\Bbb R)$} functions and
  {C}arleson measures in the {B}essel setting}, Integral Equations Operator
  Theory, 66 (2010), pp.~463--494.

\bibitem{BDT}
{\sc J.~J. Betancor, J.~Dziuba{\'n}ski, and J.~L. Torrea}, {\em On {H}ardy
  spaces associated with {B}essel operators}, J. Anal. Math., 107 (2009),
  pp.~195--219.

\bibitem{BFMR}
{\sc J.~J. Betancor, J.~C. Fari{\~n}a, T.~Mar\'{\i}nez, and
  L.~Rodr{\'{\i}}guez-Mesa}, {\em Higher order {R}iesz transforms associated
  with {B}essel operators}, Ark. Mat., 46 (2008), pp.~219--250.

\bibitem{BFS}
{\sc J.~J. Betancor, J.~C. Fari{\~n}a, and A.~Sanabria}, {\em On
  {L}ittlewood-{P}aley functions associated with {B}essel operators}, Glasg.
  Math. J., 51 (2009), pp.~55--70.

\bibitem{BMR}
{\sc J.~J. Betancor, T.~Mart{\'{\i}}nez, and L.~Rodr{\'{\i}}guez-Mesa}, {\em
  Laplace transform type multipliers for {H}ankel transforms}, Canad. Math.
  Bull., 51 (2008), pp.~487--496.

\bibitem{BS2}
{\sc J.~J. Betancor and K.~Stempak}, {\em On {H}ankel conjugate functions},
  Studia Sci. Math. Hungar., 41 (2004), pp.~59--91.

\bibitem{Ger}
{\sc G.~Bourdaud}, {\em Remarques sur certains sous-espaces de {${\rm BMO}(\Bbb
  R^n)$} et de {${\rm bmo}(\Bbb R^n)$}}, Ann. Inst. Fourier (Grenoble), 52
  (2002), pp.~1187--1218.

\bibitem{Ca}
{\sc L.~Carleson}, {\em Interpolations by bounded analytic functions and the
  corona problem}, Ann. of Math. (2), 76 (1962), pp.~547--559.

\bibitem{CR}
{\sc S.~{Chaabi} and S.~{Rigat}}, {\em {Decomposition theorem and Riesz basis
  for axisymmetric potentials in the right half-plane}}, arXiv: 1402.0473v1, 2014.

\bibitem{ChF}
{\sc S.-Y.~A. Chang and R.~Fefferman}, {\em A continuous version of duality of
  {$H^{1}$} with {BMO} on the bidisc}, Ann. of Math. (2), 112 (1980),
  pp.~179--201.

\bibitem{Jie}
{\sc J.~Chen}, {\em Boundary behavior of harmonic functions on manifolds}, J.
  Math. Anal. Appl., 267 (2002), pp.~310--328.

\bibitem{CL}
{\sc J.~C. Chen and C.~Luo}, {\em Duality of {$H^1$} and {BMO} on positively
  curved manifolds and their characterizations}, in Harmonic analysis
  ({T}ianjin, 1988), vol.~1494 of Lecture Notes in Math., Springer, Berlin,
  1991, pp.~23--38.

\bibitem{CLMS}
{\sc R.~Coifman, P.-L. Lions, Y.~Meyer, and S.~Semmes}, {\em Compensated
  compactness and {H}ardy spaces}, J. Math. Pures Appl. (9), 72 (1993),
  pp.~247--286.

\bibitem{CS}
{\sc M.~Cotlar and C.~Sadosky}, {\em Two distinguished subspaces of product
  {BMO} and {N}ehari-{AAK} theory for {H}ankel operators on the torus},
  Integral Equations Operator Theory, 26 (1996), pp.~273--304.

\bibitem{DF} 
{\sc J.E.~Daly and S.~Fridli}, {\em The dual spaces of certain Hardy spaces on $\mathbb{R}^+$ 
and on $\mathbb{N}$}, Ann. Univ.  Sci. Budapest, Sect. Comp., 33 (2010), pp.~123--136.

\bibitem{DY2}
{\sc X.~T. Duong and L.~Yan}, {\em New function spaces of {BMO} type, the
  {J}ohn-{N}irenberg inequality, interpolation, and applications}, Comm. Pure
  Appl. Math., 58 (2005), pp.~1375--1420.

\bibitem{DY3}
\leavevmode\vrule height 2pt depth -1.6pt width 23pt, {\em Duality of Hardy and BMO spaces associated with operators with heat kernel bounds}, J. Amer. Math. Soc., 18 (2005), pp.~943--973.

\bibitem{DYZ}
{\sc X.~T. Duong, L.~Yan, and C.~Zhang}, {\em On characterization of {P}oisson
  integrals of {S}chr\"odinger operators with {BMO} traces}, J. Funct. Anal.,
  266 (2014), pp.~2053--2085.

\bibitem{Dz}
{\sc J.~Dziuba{\'n}ski}, {\em Hardy spaces associated with semigroups generated
  by {B}essel operators with potentials}, Houston J. Math., 34 (2008),
  pp.~205--234.

\bibitem{DGMTZ}
{\sc J.~Dziuba{\'n}ski, G.~Garrig{\'o}s, T.~Mart{\'i}nez, J.~L. Torrea, and
  J.~Zienkiewicz}, {\em {$BMO$} spaces related to {S}chr\"odinger operators
  with potentials satisfying a reverse {H}\"older inequality}, Math. Z., 249
  (2005), pp.~329--356.

\bibitem{EMOT}
{\sc A.~Erd{\'e}lyi, W.~Magnus, F.~Oberhettinger, and F.~G. Tricomi}, {\em
  Tables of integral transforms. {V}ol. {II}}, McGraw-Hill Book Company, Inc.,
  New York-Toronto-London, 1954.

\bibitem{EO}
{\sc S.-L. Eriksson and H.~Orelma}, {\em Mean value properties for the
  {W}einstein equation using the hyperbolic metric}, Complex Anal. Oper.
  Theory, 7 (2013), pp.~1609--1621.

\bibitem{FJN2}
{\sc E.~B. Fabes, R.~Johnson, and U.~Neri}, {\em Green's formula and a
  characterization of the harmonic functions with bmo traces}, Ann. Univ. Ferrara, Sez. VII, Sc. Mat., 21 (1975), pp.~147--157.

\bibitem{FJN1}
\leavevmode\vrule height 2pt depth -1.6pt width 23pt, {\em Spaces of harmonic
  functions representable by {P}oisson integrals of functions in {${\rm BMO}$}
  and {${\mathcal L}_{p,\lambda }$}}, Indiana Univ. Math. J., 25 (1976),
  pp.~159--170.

\bibitem{FN}
{\sc E.~B. Fabes and U.~Neri}, {\em Dirichlet problem in {L}ipschitz domains
  with {BMO} data}, Proc. Amer. Math. Soc., 78 (1980), pp.~33--39.

\bibitem{FS}
{\sc C.~Fefferman and E.~M. Stein}, {\em {$H^{p}$} spaces of several
  variables}, Acta Math., 129 (1972), pp.~137--193.

\bibitem{FL}
{\sc S.~H. Ferguson and M.~T. Lacey}, {\em A characterization of product {BMO}
  by commutators}, Acta Math., 189 (2002), pp.~143--160.

\bibitem{G} 
{\sc J.B.~Garnett}, {\em Bounded analytic functions}, Springer, New York, 2007.

\bibitem{Hirs}
{\sc I.I. Hirschman, Jr.}, {\em Variation diminishing Hankel transform}, J. d'Analyse Math., 8 (1960-61), pp.~307--336.

\bibitem{HM1}
{\sc S.~Hofmann and S.~Mayboroda}, {\em Hardy and {BMO} spaces associated to
  divergence form elliptic operators}, Math. Ann., 344 (2009), pp.~37--116.

\bibitem{HW}
{\sc T.~Hyt\"onen and L.~Weis}, {\em The {B}anach space-valued {BMO},
  {C}arleson's condition, and paraproducts}, J. Fourier Anal. Appl., 16 (2010),
  pp.~495--513.

\bibitem{JN}
{\sc F.~John and L.~Nirenberg}, {\em On functions of bounded mean oscillation},
  Comm. Pure Appl. Math., 14 (1961), pp.~415--426.

\bibitem{LT}
{\sc M.~Lacey and E.~Terwilleger}, {\em Hankel operators in several complex variables and product BMO}, Houston J. Math., 35 (2009), pp.~159--183.

\bibitem{Leu}
{\sc H.~Leutwiler}, {\em Best constants in {H}arnack inequality for the {W}einstein equation}, Aequ. Math., 34 (1987), pp.~304--315.

\bibitem{LTZ}
{\sc H.~Liu, L.~Tang, and H.~Zhu}, {\em Weighted {H}ardy spaces and {$BMO$}
  spaces associated with {S}chr\"odinger operators}, Math. Nachr., 285 (2012),
  pp.~2173--2207.

\bibitem{MMNO}
{\sc J.~Mateu, P.~Mattila, A.~Nicolau, and J.~Orobitg}, {\em B{MO} for
  nondoubling measures}, Duke Math. J., 102 (2000), pp.~533--565.

\bibitem{MS}
{\sc B.~Muckenhoupt and E.~M. Stein}, {\em Classical expansions and their
  relation to conjugate harmonic functions}, Trans. Amer. Math. Soc., 118
  (1965), pp.~17--92.

\bibitem{NTV}
{\sc F.~Nazarov, S.~Treil, and A.~Volberg}, {\em The {$Tb$}-theorem on
  non-homogeneous spaces}, Acta Math., 190 (2003), pp.~151--239.

\bibitem{NoRo} {\sc A.~Nowak and L.~Roncal}, {\em Potential operators associated with Jacobi and Fourier-Bessel expansions}, arXiv:1212.6342v2, 2013.

\bibitem{NoSt}
{\sc A.~Nowak and K.~Stempak}, {\em Weighted estimates for the {H}ankel
  transform transplantation operator}, Tohoku Math. J. (2), 58 (2006),
  pp.~277--301.

\bibitem{NoSt3}
\leavevmode\vrule height 2pt depth -1.6pt width 23pt, {\em On
  {$L^p$}-contractivity of {L}aguerre semigroups}, Illinois J. Math., 56
  (2012), pp.~433--452.

\bibitem{PS}
{\sc S.~Pott and C.~Sadosky}, {\em Bounded mean oscillation on the bidisk and
  operator {BMO}}, J. Funct. Anal., 189 (2002), pp.~475--495.

\bibitem{RS}
{\sc R.~Rochberg and S.~Semmes}, {\em A decomposition theorem for {BMO} and
  applications}, J. Funct. Anal., 67 (1986), pp.~228--263.

\bibitem{Sh}
{\sc Z.~W. Shen}, {\em {$L^p$} estimates for {S}chr\"odinger operators with
  certain potentials}, Ann. Inst. Fourier (Grenoble), 45 (1995), pp.~513--546.

\bibitem{Ste2}
{\sc E.~M. Stein}, {\em Harmonic analysis: real-variable methods,
  orthogonality, and oscillatory integrals}, vol.~43 of Princeton Mathematical
  Series, Princeton University Press, Princeton, NJ, 1993.

\bibitem{Ti}
{\sc E.~C. Titchmarsh}, {\em Introduction to the theory of {F}ourier
  integrals}, Chelsea Publishing Co., New York, third~ed., 1986.

\bibitem{T}
{\sc X.~Tolsa}, {\em B{MO}, {$H^1$}, and {C}alder\'on-{Z}ygmund operators for
  non doubling measures}, Math. Ann., 319 (2001), pp.~89--149.

\bibitem{Va}
{\sc M.~Varapoulus}, {\em BMO function and the $\overline{\partial}$ equation},
 Pacific J. Math., 71 (1977), pp.~221--273.

\bibitem{Vi}
{\sc M.~Villani}, {\em Riesz transforms associated to {B}essel operators},
  Illinois J. Math., 52 (2008), pp.~77--89.

\end{thebibliography}
\end{document}